\newtheorem{lem}{Lemma}[section]
\newtheorem{prop}{Proposition}[section]
\newtheorem{cor}{Corollary}[section]
\newtheorem{thm}{Theorem}[section]
\theoremstyle{definition}
\theoremstyle{remark}
\theoremstyle{remark}
\newtheorem{remark}{Remark}[section]
\numberwithin{equation}{section}
\newcommand{\set}[1]{\left\{#1\right\}}
\newcommand{\C}{{\mathbb C}}
\newcommand{\N}{{\mathbb N}}
\newcommand{\R}{{\mathbb R}}
\begin{document}

\title[Stable standing waves  for a class of nonlinear SP equations]{Stable standing waves  for a class of nonlinear  Schr\"odinger-Poisson equations}

\author{Jacopo Bellazzini}
\address{J. Bellazzini, \newline Dipartimento di Matematica Applicata ``U. Dini'', University of Pisa, via Buonarroti 1/c, 56127 Pisa, ITALY}%
\email{j.bellazzini@ing.unipi.it}%
\author{Gaetano Siciliano}
\address{G. Siciliano, \newline Dipartimento di Matematica, Universit\`a degli Studi di Bari, via Orabona 4, 70125 Bari, Italy}%
\email{siciliano@dm.uniba.it}
\thanks{The authors are partially supported by M.I.U.R project PRIN2007 ``Variational and topological methods in the study of nonlinear phenomena'' }
\thanks{The second author is also supported by J. Andaluc\'ia (FQM  116)}

\keywords{Schr\"odinger-Poisson equations, standing waves, orbital stability}
\subjclass[2000]{35J50, 35Q41, 35Q55, 37K45} 
\maketitle

\begin{abstract}
We prove the existence of orbitally stable standing waves with prescribed 
$L^2$-norm for the following  Schr\"odinger-Poisson type equation
\begin{equation*}\label{intro}
%\left\{
%\begin{array}{ll}
i\psi_{t}+ \Delta \psi - (|x|^{-1}*|\psi|^{2}) \psi+|\psi|^{p-2}\psi=0 \ \ \  \text{ in } \R^{3},\\
%-\Delta\phi= |\psi|^{2}& \ \ \  \text{ in } \R^{3},\\
%\end{array}
%\right.
\end{equation*} 
when $p\in \left\{ \frac{8}{3}\right\}\cup (3,\frac{10}{3})$.  In the case $3<p<\frac{10}{3}$ we prove the existence  and stability only for sufficiently large $L^2$-norm. 
In case $p=\frac{8}{3}$  our approach recovers  the result of Sanchez and Soler \cite{SS} %concerning the existence and stability 
for sufficiently small charges. The main point is the analysis  of the compactness of minimizing sequences
for the related constrained minimization problem. In a final section a further application to the Schr\"odinger equation involving the biharmonic operator is given.
\end{abstract}

\section{Introduction}
In this paper we study the following  Schr\"odinger-Poisson type equation
\begin{equation}\label{SP}
%\left\{
%\begin{array}{ll}
i\psi_{t}+ \Delta \psi - (|x|^{-1}*|\psi|^{2}) \psi+|\psi|^{p-2}\psi=0 \ \ \  \text{ in } \R^{3},\\
%-\Delta\phi= |\psi|^{2}& \ \ \  \text{ in } \R^{3},\\
%\end{array}
%\right.
\end{equation} 
where $\psi(x,t):\R^{3}\times[0,T)\rightarrow \C$ is the wave function, %of the Schr\"odinger equation,
 %$\phi(x):\R^{3}\rightarrow \R$ the scalar potential of the electrostatic field. % 
$*$ denotes the convolution  and $2<p<{10}/{3}$. It is known  that in this case  
 the Cauchy problem associated to \eqref{SP} is globally well-posed in $H^{1}(\R^{3};\C)$ (see e.g. \cite{C}).
% and the conservation of energy and charge hold, namely on the solutions $\psi(t)$ we have
% $$I(\psi(t))=I(\psi_0)  \ \ \text{ and }  \ \ \|\psi(t)\|_{2}=\| \psi_0\|_{2} \ \ \ t\in[0,T)$$
% where $T=T(\|\psi_{0}\|)>0 $ is the maximum time of existence of the solution and for every $v\in H^{1}(\R^{3};\C)$
% \begin{equation}\label{func1}
% I(v)=\frac{1}{2}\int |\nabla v|^{2}dx+\frac{1}{4}\int (|x|^{-1}*|v|^{2}) |v|^2dx-\frac{1}{p}\int |v|^pdx\,.
% \end{equation}

  % so that the Cauchy problem associate 
 We are interested in the search of standing wave solutions of \eqref{SP}, namely solutions of the form
 $$\psi(x,t)=e^{-i\omega t}u(x) \,,\ \ \ \omega\in \R,\ \ u(x)\in \C\,,$$
 so we are reduced to study the following semilinear elliptic equation with a non local nonlinearity
\begin{equation}\label{eq}
- \Delta u + \phi_{u} u-|u|^{p-2}u=\omega u \ \ \  \text{ in } \R^{3}
\end{equation}
where we have set
 $$\phi_{u}(x)=\int_{\R^{3}}\frac{|u(y)|^{2}}{|x-y|}dy\,.$$
 Evidently, $\phi_{u}$ satisfies $-\Delta \phi_{u}=4\pi |u|^{2}$, is uniquely determined by $u$
 and is usually interpreted as the scalar potential of the 
 electrostatic field generated by the charge density $|u|^{2}$. 
 
Because of its importance in many different physical framework, many authors have investigated
 the Schr{\"o}dinger-Poisson system  (sometimes called Schr{\"o}dinger-Poisson-Slater system).
Besides the paper of Benci and Fortunato  \cite{BF} on a bounded domain,
%years  besides the results on bounded domains (see e.g. \cite{BF, PS1,
%PS2, RS}), there are also 
many papers on ${\R}^{3}$ have treated different
aspects of this system, even with an additional external and fixed potential $V(x)$.
In particular ground states, radially and non-radially solutions are studied, see e.g.
 \cite{AzzPo, Coc, DApMu, dAv, Kik, jfa, WZh}. However in all this papers the frequency $\omega$
is seen as a parameter so the authors 
deal with the functional
$$\frac{1}{2}\int_{\R^{3}}|\nabla u|^{2}dx+\frac{\omega}{2}\int_{\R^{3}}u^{2}dx+\frac{1}{4}\int_{\R^{3}}\phi_{u} u^{2}dx-\frac{1}{p}\int_{\R^{3}}|u|^{p}dx$$
and look for its critical points in $H^{1}(\R^{3};\R).$ In this approach nothing can be said a priori on the $L^{2}$-norm of the solution.
On the other hand in \cite{PS} the problem has been studied in a bounded domain $\Omega$ with a nonhomogeneous Neumann boundary condition on the potential $\phi_{u}$:
here the compatibility condition for $\phi_{u}$ imposes to study a constrained problem on $\{u\in H_{0}^{1}(\Omega): \|u\|_{2}=1\}.$ 

%do not look for solution with a prescribed $L^{2}$ norm.
% \begin{prop}
% Assume $2<p<6$. Then for every $\psi_{0}\in H^{1}(\R^{3};\C)$ there exists $T=T(\|\psi_{0}\|)>0$
% and a unique solution $\psi\in C([0,T);H^{1}(\R^{3},\C))$ of \eqref{EQ} with $\psi(0)=\psi_{0}$ such that
% $$E(\psi(t))=E(\psi_0)  \ \ \text{ and }  \ \ \|\psi(t)\|_{2}=\| \psi_0\|_{2} \ \ \ t\in[0,T)$$
% where
% \end{prop}

In spite of the above cited papers on $\R^3$, we look for solutions $u$ with a priori prescribed  $L^{2}$-norm. %we do not consider the frequency $\omega$ given, but it is an unknown of our problem so the natural
The natural way to study the problem is to look for the constrained critical points of the functional 
 $$I(u)=\frac{1}{2}\int_{\R^{3}}|\nabla u|^{2}dx+\frac{1}{4}\int_{\R^{3}}\phi_{u}| u|^{2}dx-\frac{1}{p}\int_{\R^{3}}|u|^{p}dx$$
%\begin{equation}\label{func1}
%I(u)=\frac{1}{2}\int |\nabla u|^{2}dx+\frac{1}{4}\int \phi_u u^2dx-\frac{1}{p}\int |u|^pdx
%\end{equation}
  on the $L^{2}$-spheres in $H^{1}(\R^{3};\C)$
 $$B_{\rho}=\{u\in H^{1}(\R^{3};\C): \|u\|_{2}=\rho\}.$$
 So by a solution of \eqref{eq} we mean  a couple 
 $(\omega_{\rho}, u_\rho)\in\R\times H^{1}(\R^{3};\C)$ where $\omega_{\rho}$ is the Lagrange multiplier associated to the critical point $u_{\rho}$ on $B_{\rho}.$
%
%\begin{equation}\label{SP}
%\left\{
%\begin{array}{ll}
%- \Delta u + \phi u+|u|u^{p-2}=\omega u &\ \ \  \text{ in } \R^{3},\\
%-\Delta\phi= u^{2} &\ \ \  \text{ in } \R^{3},\\
%\end{array}
%\right.
%\end{equation}
%

%study the existence of standing waves for a nonlinear Schr\"odinger-Poisson system in $\mathbb R^{3}.$
%These kind of system describes the interaction between a matter field, the wave function $\psi$, and the electromagnetic
%field $(\mathbf E, \mathbf B)$.
Actually we are interested in the existence of solutions of \eqref{eq} with minimal energy (constrained to the sphere), i.e. to the minimization problem
\begin{equation}\label{minim}
I_{\rho}=\inf_{%u\in H^{1}(\R^{3})\cap
B_{\rho}} I(u)
\end{equation}
that makes sense for $2<p<10/3$; indeed it is well known that in this case the $C^{1}$ functional
$I$ is bounded from below and coercive
on $B_{\rho}$
(see Lemma \ref{stimap}). %  Unfortunately, due to some scaling properties of the interaction term $\int \phi_{u} u^{2}dx,$ we are forced to consider $p>3$ in order to have subadditivity (see below).
As far as we know the only results on constrained minimization for  nonlinear Schr\"odinger-Poisson are \cite{SS} in case $p=\frac{8}{3}$ and \cite{K} for $p=3$. In \cite{SS}
the authors prove that all the minimizing sequence for \eqref{minim} are compact
provided that $\rho$ is sufficently small. In \cite{K} the author proves that if $\Lambda$ is sufficently large then the infimum of the minimization problem
\begin{equation*}
I_{\rho}=\inf_{%u\in H^{1}(\R^{3})\cap
B_{\rho}} I_{\Lambda}(u)
\end{equation*}
where
$$I_{\Lambda}(u):=\frac{1}{2}\int_{\R^{3}}|\nabla u|^{2}dx+\frac{1}{4}\int_{\R^{3}}\phi_{u}| u|^{2}dx-\frac{\Lambda}{3}\int_{\R^{3}}|u|^{3}dx$$
is achieved for any $\rho$.\\
It is known that, in this kind of problems, that main difficulty concerns with the lack of compactness  of the (bounded) minimizing sequences $\{u_n\}\subset B_{\rho}$; indeed  two possible bad scenarios are possible:
\begin{itemize}
\item $u_n\rightharpoonup 0$;
\item $u_n \rightharpoonup \bar u\neq 0$ and $0<\|\bar u\|_2<\rho$.
\end{itemize}
In order to avoid the above two cases and to show that 
the infimum is achieved,
we prove a lemma (Lemma \ref{main-abs})  in  an abstract framework that guarantees the compactness
of the minimizing sequences in the right norm. We recall that the abstract lemma is essentially contained in \cite{BB} and here it has been modified for the application to a wider class of functionals. 
%We recall that the main difficulty for  constrained minimization problems with lack of compactness  is due to two possible behavior of the bounded minimizing sequence $\{u_n\}\subset B_{\rho}$:
%\begin{itemize}
%\item $u_n\rightharpoonup \bar u$ with $||\bar u||_2=0$\\
%\item $u_n \rightharpoonup \bar u$ with $0<||\bar u||_2<\rho$
%\end{itemize}
Roughly  speaking, this  lemma is a version of the Concentration Compactness principle of \cite{L} having in mind
the application to a constrained minimization problem for  functionals of the form $$I(u)=\frac{1}{2}\|u\|^{2}_{D^{m,2}}+T(u).$$
 The lemma we prove says that  if  $\bar u\neq 0$ and %$I(u)=\frac{1}{2}\|u\|^{2}_{D^{m,2}}+T(u)$ where %the nonlinear term
%$$T(u):=\frac{1}{4}\int_{\R^{3}}\phi_{u}| u|^{2}dx-\frac{1}{p}\int_{\R^{3}}|u|^{p}dx$$
$T(u)$ has a \emph{splitting}
property, i.e 
$$ T(u_n-\bar u)+T(\bar u) =T(u_n)+o(1)$$
and the infima are \emph{subadditive} in the following sense
$$ 
 I_{\rho}<I_{\mu}+I_{\sqrt{\rho^2-\mu^2}} \ \  \text{ for any }\ \  0<\mu<\rho\,,
$$
then
$\|u_n -\bar u\|_{H^{m}}=o(1)$ % \ \  \text{ with }\ \ ||\bar u||_2=\rho.$$ 
and, as a consequence,  $\|\bar u\|_2=\rho.$ 

%\begin{equation}\label{func1}
%I(u)=\frac{1}{2}\int |\nabla u|^{2}dx+\frac{1}{4}\int \phi_u u^2dx-\int \frac{|u|^p}{p}dx
%\end{equation}
%and $B_{\rho}=\{u\in H^{1}: \|u\|_{2}=\rho\}$.
As a consequence of the abstract minimization lemma we prove  the following
% \begin{thm}\label{MT1}
% Let  $2<p<\frac{8}{3}$. Then
% for every $\rho\in (0, +\infty)$  all the minimizing sequences for \eqref{minim} are compact in $H^1(\R^3;\C)$ up to translations. In particular $(u_{\rho}, omega_{\rho})\in\R\times H^{1}(\R^{3};\R)$ is solution of \eqref{eq}.
% \end{thm}

% \begin{thm}\label{MT2}
% Let  $\frac{8}{3} \leq p<3$. Then there exists $\rho_0>0$ such that 
% for every $\rho\in (0, \rho_0)$  all the minimizing sequences for \eqref{minim} are compact in $H^1(\R^3;\C)$ up to translations.
% \end{thm}

% \begin{thm}\label{MT3}
% Let  $3<p<\frac{10}{3}$. Then there exists $\rho_0>0$ such that 
% for every $\rho\in (\rho_0, \infty)$  all the minimizing sequences for \eqref{minim} are compact in $H^1(\R^3;\C)$ up to translations.
% \end{thm}

\begin{thm}\label{MT1}
Let  $p\in \left\{ \frac{8}{3} \right\} \cup (3, \frac{10}{3})$.  Then there exist $\rho_1>0$ and $\rho_2>0$ (depending on $p$) such that all the minimizing sequences for \eqref{minim} are precompact in $H^1(\R^3;\C)$ up to translations provided that
\begin{eqnarray}
&&0<\rho<\rho_1 \ \text{ if } p=\frac{8}{3} \nonumber\\
&&\rho_2<\rho<+\infty \ \text{ if } 3<p<\frac{10}{3} \nonumber.
\end{eqnarray}
In particular there exists a couple $( \omega_{\rho}, u_{\rho})\in\R\times H^{1}(\R^{3};\R)$  solution of \eqref{eq}.
\end{thm}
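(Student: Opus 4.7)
The plan is to apply the abstract compactness lemma (Lemma \ref{main-abs}) to $I$, writing $I(u)=\frac12\|\nabla u\|_2^2+T(u)$ with
\[
T(u)=\frac14\int_{\R^3}\phi_u|u|^2\,dx-\frac1p\int_{\R^3}|u|^p\,dx.
\]
Coercivity of $I$ on $B_\rho$ (Lemma \ref{stimap}) makes any minimizing sequence $\{u_n\}\subset B_\rho$ bounded in $H^1(\R^3;\C)$. Three items remain to verify: the existence of a nontrivial weak limit $\bar u$ up to translations, the splitting identity $T(u_n-\bar u)+T(\bar u)=T(u_n)+o(1)$, and the strict subadditivity $I_\rho<I_\mu+I_{\sqrt{\rho^2-\mu^2}}$ for every $0<\mu<\rho$.

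To secure a nontrivial weak limit I would first show $I_\rho<0$ in the two regimes, since this is what pins down $\rho_1$ and $\rho_2$. With the $L^2$-preserving dilation $u_t(x)=t^{3/2}u(tx)$,
\[
I(u_t)=\frac{t^2}{2}\|\nabla u\|_2^2+\frac{t}{4}\int\phi_u|u|^2\,dx-\frac{t^{3(p-2)/2}}{p}\|u\|_p^p .
\]
For $p=8/3$ the Coulomb and $L^p$ pieces scale at the common rate $t$, so for small $\rho$ the (negative) $L^p$ contribution prevails at large $t$. For $p\in(3,10/3)$ the exponent $3(p-2)/2$ lies strictly in $(3/2,2)$, and the $L^p$ term beats both the kinetic and Coulomb terms at large $t$ only after $\|u\|_p$ is large enough, i.e.\ for large $\rho$. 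Once $I_\rho<0$, Lions' vanishing lemma excludes $\sup_{y\in\R^3}\int_{B_1(y)}|u_n|^2\,dx\to 0$: otherwise $\|u_n\|_p\to 0$ for $2<p<6$, and since $\int\phi_{u_n}|u_n|^2\ge 0$ we would get $\liminf I(u_n)\ge 0$, a contradiction. After translating we may thus assume $u_n\rightharpoonup\bar u\neq 0$.

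The splitting of $T$ reduces to the Brezis--Lieb lemma for the $L^p$ part together with the Coulomb decomposition
\[
\int\phi_{u_n}|u_n|^2\,dx=\int\phi_{u_n-\bar u}|u_n-\bar u|^2\,dx+\int\phi_{\bar u}|\bar u|^2\,dx+o(1),
\]
obtained by expanding the bilinear form and killing the cross terms by weak convergence together with the Hardy--Littlewood--Sobolev inequality. The main obstacle is the strict subadditivity. I plan to derive it from a scaling inequality $I_{\lambda\rho}<\lambda^2 I_\rho$ for $\lambda>1$, tested on $L^2$-equivariant rescalings $v_\lambda(x)=\lambda^\alpha v(\lambda^\beta x)$ (with $2\alpha-3\beta=2$, so that $v_\lambda\in B_{\lambda\rho}$) of an almost-minimizer $v\in B_\rho$; the kinetic, Coulomb and $L^p$ terms then pick up respectively the powers $\lambda^{2+2\beta}$, $\lambda^{4+\beta}$ and $\lambda^{p+3\beta(p-2)/2}$, and the strict gain emerges precisely in the prescribed regimes of $p$ and $\rho$, where the nonlinearity can be made to dominate the Coulomb and kinetic contributions at the chosen scale. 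Once $I_\rho<0$ and this scaling inequality are in place, strict subadditivity follows by an elementary manipulation taking $\lambda=\rho/\max\{\mu,\sqrt{\rho^2-\mu^2}\}>1$. With all three hypotheses verified, Lemma \ref{main-abs} yields $u_n\to\bar u$ strongly in $H^1(\R^3;\C)$, so $\|\bar u\|_2=\rho$ and $I(\bar u)=I_\rho$; replacing $\bar u$ by $|\bar u|$ if necessary gives a real-valued minimizer $u_\rho$ and a Lagrange multiplier $\omega_\rho\in\R$ such that $(\omega_\rho,u_\rho)$ solves \eqref{eq}.
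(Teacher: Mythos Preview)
Your overall strategy matches the paper's: coercivity (Lemma~\ref{stimap}), negativity of $I_\rho$ in the two regimes, Lions' non-vanishing for a nontrivial weak limit up to translations, then Lemma~\ref{main-abs} fed with the Brezis--Lieb/Coulomb splitting and strict subadditivity. Two points deserve attention. First, Lemma~\ref{main-abs} asks for \eqref{(2)}--\eqref{(6)}, not only the splitting \eqref{(2)} and the subadditivity \eqref{(4)}; the homogeneity of $N$ and $M$ together with interpolation dispose of \eqref{(3)}, \eqref{(5)}, \eqref{(6)} (this is Proposition~\ref{mma} in the paper), but your ``three items'' undercounts what must be checked to reach strong $H^1$ convergence. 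Also, for $p=8/3$ your $L^2$-invariant dilation gives $I(u_t)=\tfrac{t^2}{2}\|\nabla u\|_2^2+t\bigl(N(u)+M(u)\bigr)$; one gets $I(u_t)<0$ for \emph{small} $t$ once $N(u)+M(u)<0$ (which holds for small $\rho$ via $N(u)\le C\rho^{4/3}\|u\|_{8/3}^{8/3}$), not for large $t$, where the kinetic term dominates.

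The more substantive gap concerns strict subadditivity for $3<p<10/3$. The scaling inequality $I_{\lambda\mu}<\lambda^2 I_\mu$ (for $\lambda>1$) is obtained along a minimizing sequence at level $I_\mu<0$: this is what forces $A(u_n)$ and $|M(u_n)|$ away from zero and makes the correction term $f(\theta,u_n)$ strictly negative. Your Lions-type chain with $\lambda=\rho/\max\{\mu,\sqrt{\rho^2-\mu^2}\}$ therefore presupposes that the infimum is negative at both radii $\mu$ and $\sqrt{\rho^2-\mu^2}$, which may fail when one of them lies below the threshold $\rho_2$. The paper's Lemma~\ref{fondamentale} closes this with a short trichotomy: if both are below $\rho_2$ then $I_\mu+I_{\sqrt{\rho^2-\mu^2}}\ge 0>I_\rho$; if only the smaller one is, use $I_\rho<I_{\sqrt{\rho^2-\mu^2}}$ (from the scaling inequality at the larger radius, which does exceed $\rho_2$) together with $I_\mu\ge 0$; only when both exceed $\rho_2$ does the standard Lions manipulation apply directly. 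You should insert this case split.
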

\begin{remark}
We underline that the result for $p=\frac{8}{3}$ it has been proved first 
by \cite{SS}
with a different approach  to that developed in this paper. However it is interesting that our result for $p=\frac{8}{3}$ is proved within the same general framework that is applied for $3<p<\frac{10}{3}$.%and that this approach can, \textcolor{red}{in theory}, be applied also for the case $2<p<\frac{8}{3}$ and $\frac{8}{3}<p<3$.
\end{remark}
As a matter of fact  there are few results concerning the orbital stability of 
standing waves for Schr\"odinger-Poisson equation. We mention 
\cite{IL} and \cite{K} where the orbital stability is achieved by following
the original approach of \cite{GSS}. On the other hand, following \cite{CL} and
\cite{SS},
the compactness of minimizers on $H^1(\R^3;\C)$ and the conservation laws 
give rise to the orbital stability of the standing waves $\psi =e^{-i\omega_{\rho} t}u_{\rho}$ 
without further efforts; so we get the following

\begin{thm}\label{stabil}
Let  $p\in \left\{\frac{8}{3} \right\}\cup (3, \frac{10}{3})$.% and $(\omega_{\rho},u_{\rho})$ the solutions provided by Theorem \ref{MT1}.
Then 
%there exists $\rho_0>0$ such that
%for any $\rho\in (\rho_0,+ \infty)$   
the set 
$$S_{\rho}=\{e^{i\theta }
u(x): \theta\in [0,2\pi), \|u\|_{2}=\rho, \ 
 I(u)=I_{\rho}\}$$
is orbitally stable.
%is stable standing waves $\psi_{\rho}(t,x)=e^{-i\omega_{\rho}t}u_{\rho}(x)$  are orbitally stable
%solutions of \eqref{SP}.

% , that is 
% for every $\epsilon>0$ there exists $\delta>0$ such that for any $\psi_{0}\in H^{1}(\R^{3})$ with
% $\|u_{\rho}-\psi_{0}\|_{H^{1}(\R^{3})}<\delta$ we have
% $$\forall \, t>0 \ \ \ \inf_{\theta\in \R,\, y\in\R^{3}}\|\psi(t,.)-e^{i\theta}u_{\rho}(.-y)\|_{H^{1}(\R^{3})}<\varepsilon$$
% where $\psi(t,.)$ is the solution of \eqref{SP} with initial datum $\psi_{0}$.
\end{thm}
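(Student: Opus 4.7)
The plan is to follow the now-classical scheme of \cite{CL,SS}, deducing orbital stability from the conjunction of (i) the global well-posedness in $H^1(\R^3;\C)$ of the Cauchy problem for \eqref{SP} recalled in the Introduction, (ii) the conservation along the flow of the $L^2$-norm and of the energy $I$, and (iii) the strong compactness up to translations of every minimizing sequence for \eqref{minim}, furnished by Theorem \ref{MT1}. Given these three ingredients the proof reduces to a short argument by contradiction, so that no further abstract stability machinery (such as that of \cite{GSS}) is needed.

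Concretely, I would argue as follows. Assume $S_\rho$ is not orbitally stable. Then there exist $\varepsilon_0>0$, a sequence $\{\psi_{0,n}\}\subset H^1(\R^3;\C)$ with
$$\inf_{v\in S_\rho}\|\psi_{0,n}-v\|_{H^1}\longrightarrow 0,$$
and times $t_n\ge 0$ such that the global solutions $\psi_n(t)$ of \eqref{SP} with datum $\psi_{0,n}$ satisfy $\inf_{v\in S_\rho}\|\psi_n(t_n)-v\|_{H^1}\ge \varepsilon_0$. Set $u_n:=\psi_n(t_n)$. Continuity of $\|\cdot\|_2$ and of $I$ on $H^1$ (the nonlocal term is continuous by Hardy--Littlewood--Sobolev, and the $L^p$ term by Sobolev embedding since $p<10/3<6$), together with $\|v\|_2=\rho$ and $I(v)=I_\rho$ for $v\in S_\rho$, give $\|\psi_{0,n}\|_2\to\rho$ and $I(\psi_{0,n})\to I_\rho$. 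By the conservation laws one then gets $\|u_n\|_2\to\rho$ and $I(u_n)\to I_\rho$.

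Next I would normalize and invoke Theorem \ref{MT1}. Set $\tilde u_n:=\rho\,u_n/\|u_n\|_2\in B_\rho$; since $\{u_n\}$ is bounded in $H^1$ (by the coercivity of $I$ on $B_\rho$ proved in Lemma \ref{stimap} and the bound $I(u_n)\to I_\rho$), one has $\|u_n-\tilde u_n\|_{H^1}\to 0$ and hence $I(\tilde u_n)\to I_\rho$, so that $\{\tilde u_n\}$ is a minimizing sequence for \eqref{minim}. Theorem \ref{MT1} then produces (for a subsequence) translations $y_n\in\R^3$ and $u^\star\in B_\rho$ with $I(u^\star)=I_\rho$ such that $\tilde u_n(\cdot-y_n)\to u^\star$ in $H^1$, equivalently $\tilde u_n\to u^\star(\cdot+y_n)$ in $H^1$. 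Because $I$ and the $L^2$-norm are invariant under translations, $u^\star(\cdot+y_n)\in S_\rho$; combining with $\|u_n-\tilde u_n\|_{H^1}\to 0$ yields $\inf_{v\in S_\rho}\|u_n-v\|_{H^1}\to 0$, which contradicts the choice of the times $t_n$.

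I do not expect any serious obstacle, since Theorem \ref{MT1} carries out the nontrivial compactness work. The one point requiring care is the compatibility between the symmetry group of \eqref{SP} and the definition of $S_\rho$: the $U(1)$-phase $e^{i\theta}$ is already built into $S_\rho$, while the translations $y_n$ produced by Theorem \ref{MT1} must be reabsorbed by exploiting the translation invariance of $I$, $\|\cdot\|_2$ and hence of $S_\rho$ itself. This is routine, and the remainder of the argument is standard bookkeeping with the conservation laws.
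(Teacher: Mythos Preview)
Your proposal is correct and follows essentially the same contradiction scheme as the paper: negate orbital stability, use continuity of $I$ and $\|\cdot\|_2$ together with the conservation laws to produce a minimizing sequence for $I_\rho$ out of $\psi_n(t_n)$ (after an $L^2$-normalization), and then invoke Theorem \ref{MT1} plus the translation invariance of $S_\rho$ to reach a contradiction. The only cosmetic difference is that you normalize at time $t_n$ while the paper normalizes the initial data, and you spell out the role of the translations a bit more explicitly; neither affects the substance of the argument.
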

%We remark that the compactness of the minimizers for the minimization problem
%gives the orbital stability of the standing waves and no more efforts
%are required.\\

The definition of {\sl orbital stability} is recalled in the Section 4.
%\begin{remark}

\medskip

We underline that Lemma \ref{main-abs} can be applied to
a wider class of minimization problems involving, for instance the biharmonic operator. 
For this reason, in the final Section 5 we study the following minimization problem
$$
J_{\rho}=\inf_{ B_\rho} 
\left (\frac 12 \|\Delta u\|_2^2 + \int_{\R^N}F(u)dx \right)
$$
where $B_{\rho}=\{u\in H^{2}(\R^{N}): \|u\|_{2}=\rho\}$ and the nonlinear local
term $F:H^2(\R^N)\rightarrow \R$ fulfills some suitable assumptions that  will be specified later. As a byproduct we obtain the orbital stability %(in a sense that is described in the Appendix)
for the standing waves of the following Schr\"odinger equation involving the bilaplace operator
$$
i\psi_{t} - \Delta^2 \psi  -F'(|\psi|) \frac{\psi}{|\psi|}=0,
\hbox{ } (x,t) \in \R^N\times\R.
$$
%\end{remark}

\subsection{ Notation}  In all the paper it is understood the all the functions,
unless otherwise stated, are
complex-valued, but we will write simply $L^{s}(\R^{3}), H^{1}(\R^{3})....$ where, for any $1\leq s < +\infty$, $L^s(\R^3)$ is the usual Lebesgue space endowed
with the norm
$$||u||_{s}^s:=\int_{\R^{3}} |u|^sdx,$$
%We denote $D^{1,2}(\R^3)$ the completition of $C^{\infty}_{0}$ with respect to the norm
and
%$$||u||_{D^{1,2}}^2:=\int_{\R^{3}} |\nabla u|^2dx$$
$H^1(\R^3)$ the usual Sobolev space  endowed with the norm
$$\|u\|_{H^1}^2:=\int_{\R^{3}} |\nabla u|^2dx+\int_{\R^{3}} |u|^2dx.$$
In order to state the abstract  lemma  
%it is useful to introduce
let us the space $D^{m,2}(\R^N)$. It is defined 
 as the completion of $C_0^{\infty}(\R^N)$ with respect to the norm
$$\|u\|_{D^{m,2}}^2:=\sum_{\alpha_{1}+...+\alpha_{N}=m} \int_{\R^N}|D^{\alpha}u|^2dx \ \ \text{where} \ \ 
\alpha\in \N^{N},
D^{\alpha}=\partial_{x_{1}}^{\alpha_{1}}\cdot\cdot\, \partial_{x_{N}}^{\alpha_{N}}$$
We need also $H^m(\R^N)$, the usual Sobolev space with norm
$$||u||_{H^m}^2:=\|u\|_{D^{m,2}}^2+\|u\|_2^2.$$
We will use $C$ to denote a suitable positive constant whose value may change also in the same line
and the symbol $o(1)$ to denote a quantity which goes to zero. We also use $O(1)$ to denote a bounded sequence.

\bigskip

The paper is organized as follows: Section 2 is devoted to the minimization
problem and to the proof of the abstract lemma. Section 3 concerns the proof
of the main theorem while in Section 4 the orbital stability of the standing waves is proved.
In the final Section 5 the abstract lemma is applied to the biharmonic Schr\"odinger equation.

\medskip

%In the Appendix an appication to the biharmonic Schrodinger equation  is given.
\section{The minimization problem}
As we have anticipated, we first prove an abstract result  on  a constrained minimization problem on  Sobolev  spaces $H^m(\R^N), N\ge3$. 
Let we consider the following  problem
\begin{equation*} \label{mini1}
I_{\rho}=\inf_{B_{\rho}} \ I(u)
\end{equation*}
where $ B_{\rho}:=\{ u \in H^m(\R^N) \text{ such that } \|u\|_{2}=\rho\}$ and
\begin{equation} \label{func1}
I(u):= \frac 12   \|u \|_{D^{m,2}}^2 +T(u)
\end{equation}
Under suitable assumption on $T$ we have the strong convergence of the weakly
convergent minimizing sequence.
%where 
%\begin{equation}\nonumber
%\| u\|^2:= \int_{\R^{3}} |\nabla
%u|^2dx
%\end{equation}
%and 
%\begin{equation}
%N(u):=\frac {1}{4}\int \phi_{u} u^{2}dx
%\end{equation}
%\begin{equation}
%M(u):=-\frac{1}{p} \int |u|^{p}dx
%\end{equation}

\begin{lem} \label{main-abs}
Let $T$ be a $C^{1}$ functional on $H^m(\R^{N})$ and $\set{u_n}\subset  B_{\rho}$ be a minimizing sequence for $I_{\rho}$
such that $u_n\rightharpoonup \bar u\neq 0$; let us set $\mu= \|\bar u\|_{2}\in(0,\rho]$. 

Assume also that
%\begin{equation}\label{(1)}
% u_n\rightharpoonup \bar u\neq 0 ;
%\end{equation}
 \begin{equation}\label{(2)}
T(u_n-\bar u) + T(\bar u)=T(u_n)+ o(1);
\end{equation}
%\begin{equation}\label{(2)}
%\left.
%\begin{array}{l}
%N(u_n-\bar u)+N(\bar u)=N(u_n)+o(1)\\
%M(u_n-\bar u)+M(\bar u)=M(u_n)+o(1)\,; 
%\end{array}
%\right.
%\end{equation}

%for every  $\set{\alpha_{n}}\subset \R$ such that $\alpha_{n} \to 1$
\begin{equation}\label{(3)}
T(\alpha_{n}(u_{n}-\bar u))-T( u_n-\bar u)=o(1)
\end{equation}
where $\alpha_{n}= {\sqrt{\rho^2-\mu^2}}/{\| u_n-\bar u\|_2}$
%
%\begin{equation}\label{(3)}
%\left.
%\begin{array}{l}
%N(\alpha_n(u_n-\bar u))-N( u_n-\bar u)=o(1)\\
%M(\alpha_n(u_n-\bar u))-M( u_n-\bar u)=o(1)
%\end{array}
%\right.
%\end{equation}
and finally that
\begin{equation}\label{(4)}
 I_{\rho}<I_{\mu}+I_{\sqrt{\rho^2-\mu^2}} \ \  \text{ for any }\ \  0<\mu<\rho\,.
\end{equation}
%\begin{equation}\label{(4)}
%T(\alpha_{n}(u_n-\bar u)) -T(u_n-\bar u) = o(1) \qquad \forall\, \set{\alpha_{n}}\subset \R\ \text{s.t.}\ \alpha_{n} \to 1;
%\end{equation}
%\begin{equation} \label{(6)}
%<T'(u_{n}), u_{n}> = O(1)
%\end{equation}
%\begin{equation} \label{(5)}
%<T'(u_{n}) - T'(u_{m}), u_{n} - u_{m}> = o(1) \qquad \text{as}\ n,m \to \infty
%\end{equation}
%\begin{equation}\label{(3)}
% \ T(u_{\theta}) \leq \theta^2 T(u)  \qquad \forall\,u\in H
%\end{equation}
%where
%$$
%u_{\theta}(x):=u\left(\frac{x}{\theta^{\frac{2}{N}}}\right), \theta>1
%$$
Then $\bar u \in B_{\rho}$. 

Moreover if, as $n,m \to +\infty$
\begin{equation}\label{(5)}
<T'(u_{n}) - T'(u_{m}), u_{n} - u_{m}> = o(1)
\end{equation}
%
%\begin{equation}\label{(5)}
%\left.
%\begin{array}{l}
%<N'(u_{n}) - N'(u_{m}), u_{n} - u_{m}> = o(1)\\ 
%<M'(u_{n}) - M'(u_{m}), u_{n} - u_{m}> = o(1)
%\end{array}
%\right.
%\end{equation}
%
%\begin{eqnarray}\label{(5)}
%&<N'(u_{n}) - N'(u_{m}), u_{n} - u_{m}> = o(1)&\\ 
%&<M'(u_{n}) - M'(u_{m}), u_{n} - u_{m}> = o(1)&
%\end{eqnarray}
\begin{equation}\label{(6)}
<T'(u_{n}), u_{n}> = O(1)
\end{equation}
%
%\begin{equation}\label{(6)}
%\left.
%\begin{array}{l} 
%<N'(u_{n}), u_{n}> = O(1)\\
%<M'(u_{n}), u_{n}> = O(1)
%\end{array}
%\right.
%\end{equation}
then $||u_n-\bar u||_{H^m(\R^{N})}\rightarrow 0$.
%\begin{equation} \label{(5)}
%<T'(u_{n}) - T'(u_{m}), u_{n} - u_{m}> = o(1) \qquad \text{as}\ n,m \to \infty
%\end{equation}
\end{lem}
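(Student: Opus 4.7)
The plan is to split the argument into two stages, corresponding to the two conclusions of the lemma.

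Stage 1 (no mass loss, $\bar u \in B_\rho$). Argue by contradiction, assuming $0<\mu<\rho$. The main tool is the Brezis–Lieb type decomposition for the Hilbert norms. Since $u_n\rightharpoonup \bar u$ in $H^m$, expanding the squared norms and exploiting orthogonality of the weak limit yields
$$\|u_n\|_{D^{m,2}}^2=\|\bar u\|_{D^{m,2}}^2+\|u_n-\bar u\|_{D^{m,2}}^2+o(1),\qquad \|u_n\|_2^2=\|\bar u\|_2^2+\|u_n-\bar u\|_2^2+o(1).$$
The second identity forces $\|u_n-\bar u\|_2\to \sqrt{\rho^2-\mu^2}$; together with hypothesis \eqref{(2)} the first gives $I(u_n)=I(\bar u)+I(u_n-\bar u)+o(1)$. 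Now rescale: set $v_n=\alpha_n(u_n-\bar u)$ with $\alpha_n\to 1$, so $v_n\in B_{\sqrt{\rho^2-\mu^2}}$, and note that $\alpha_n^2\|u_n-\bar u\|_{D^{m,2}}^2 = \|u_n-\bar u\|_{D^{m,2}}^2+o(1)$ (the $D^{m,2}$-norms are bounded) while hypothesis \eqref{(3)} absorbs the rescaling in $T$. Hence $I(v_n)=I(u_n-\bar u)+o(1)$, and using the trivial bounds $I(\bar u)\ge I_\mu$, $I(v_n)\ge I_{\sqrt{\rho^2-\mu^2}}$ we conclude
$$I_\rho=\lim_n I(u_n)=I(\bar u)+\lim_n I(v_n)\ge I_\mu+I_{\sqrt{\rho^2-\mu^2}},$$
directly contradicting \eqref{(4)}. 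Thus $\mu=\rho$, $\bar u\in B_\rho$, and in particular $u_n\to \bar u$ strongly in $L^2$.

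Stage 2 (strong $H^m$ convergence). It remains to prove $\|u_n-\bar u\|_{D^{m,2}}\to 0$. I would apply Ekeland's variational principle on the $C^1$ manifold $B_\rho$ to assume, without loss of generality, that $\{u_n\}$ is a constrained Palais–Smale sequence: there exist Lagrange multipliers $\omega_n$ with $I'(u_n)-\omega_n u_n\to 0$ in $(H^m)^*$. Pairing this relation with $u_n$ gives $\omega_n\rho^2=\|u_n\|_{D^{m,2}}^2+\langle T'(u_n),u_n\rangle+o(1)$, which is $O(1)$ by the boundedness of the minimizing sequence in $D^{m,2}$ and by hypothesis \eqref{(6)}; so passing to a subsequence, $\omega_n\to\omega$. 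Then, testing against $u_n-u_m$ and expanding $I'$,
$$\|u_n-u_m\|_{D^{m,2}}^2+\langle T'(u_n)-T'(u_m),u_n-u_m\rangle=\langle \omega_n u_n-\omega_m u_m,u_n-u_m\rangle_{L^2}+o(1).$$
The right-hand side tends to $0$ as $n,m\to\infty$ because $u_n\to \bar u$ in $L^2$ and $\omega_n$ is bounded, while the second term on the left is $o(1)$ by hypothesis \eqref{(5)}. Hence $\{u_n\}$ is Cauchy in $D^{m,2}$, and combined with $L^2$-convergence this gives $u_n\to\bar u$ in $H^m$.

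The main obstacle is really Stage 2: Stage 1 is the standard concentration–compactness alternative, forced by the strict subadditivity \eqref{(4)}. What is delicate is that hypotheses \eqref{(5)}--\eqref{(6)} do not by themselves imply strong $D^{m,2}$ convergence — they only become useful after producing the Lagrange multipliers $\omega_n$ via Ekeland and verifying that they are bounded, so that the cross term $\langle\omega_n u_n-\omega_m u_m,u_n-u_m\rangle_{L^2}$ can be controlled using the already-obtained $L^2$ compactness. The choice of hypotheses \eqref{(5)}--\eqref{(6)} is exactly what makes this Cauchy-in-$D^{m,2}$ scheme go through, and verifying them will be the substantive check in any concrete application.
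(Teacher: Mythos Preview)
Your proof is correct and follows essentially the same approach as the paper's: Stage~1 is the same contradiction argument via the Hilbert-space Brezis--Lieb splitting, the rescaling $v_n=\alpha_n(u_n-\bar u)$, and hypotheses \eqref{(2)}--\eqref{(4)}; Stage~2 is the same Ekeland/Palais--Smale argument, extracting bounded Lagrange multipliers via \eqref{(6)} and then proving Cauchy in $D^{m,2}$ via \eqref{(5)} and the $L^2$-convergence from Stage~1.
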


\begin{proof}
%By the weak convergence we have $\| \bar u\|_2= \mu \in (0,\rho]$. 
We argue by contradiction and assume that $\mu_{0} < \rho$. Since $u_{n}-\bar u\rightharpoonup0$,
$$
\| u_n-\bar u\|_2^2+\|\bar u\|_2^2=\| u_n\|_2^2+o(1)
$$
hence 
\begin{equation}\label{a}
\alpha_{n}=\frac {\sqrt{\rho^2-\mu^2}}{\| u_n-\bar u\|_2}\rightarrow 1.
\end{equation}
%\begin{equation} \label{convergenza-norme}
%\alpha_{n} := \frac{\sqrt{\rho^2-\mu^2}}{\| u_n-\bar u\|_2} \to 1
%\end{equation}
Since $\{u_{n}\}$ is a minimizing sequence, we get
$$
\frac 12 \| u_{n} \|_{D^{m,2}}^2 +T(u_n)=I_{\rho}+o(1)
$$
and by (\ref{(2)}), we deduce also
$$
\frac 12 \| u_n-\bar u\|_{D^{m,2}}^2 +\frac 12 \|\bar u\|_{D^{m,2}}^2+T(u_n-\bar u)+T(\bar u)=I_{\rho}+o(1).
$$
Hence using \eqref{a} and (\ref{(3)}) we infer %using the sequence $\alpha_{n}$ %defined in (\ref{convergenza-norme}),
$$
\frac 12 \| \alpha_{n}(u_n-\bar u) \|_{D^{m,2}}^2 +\frac 12 \|\bar u\|_{D^{m,2}}^2 +T(\alpha_{n}(u_n-\bar u))+T(\bar u)=I_{\rho}+o(1).
$$
%
%For $u_{\theta}$ we have $\|u_{\theta}\|_{L^2}=\theta\rho$ and 
%$$
%||u_{\theta}||= \int_{\R^N} \frac{\theta^2}{\theta^{4/N}}
%|\nabla u|^2 dx =
%$$
%$$
%= \theta^2 \int_{\R^N}|\nabla u|^2 \, dx+ \theta^2 \, \left( \frac{1}{\theta^{4/N}} -1 \right) \int_{\R^N}|\nabla u|^2 \, dx
%$$
%For any $\theta>1$ and for any minimizing sequence $\set{u_n}$ satisfying (\ref{(1)}) the last term is negative and bounded from zero. Hence by (\ref{(3)}) 
%we have
%\begin{equation}\nonumber
%I_{\theta \rho}= \inf \left(\frac 12 \|u_{\theta}|\|^2+T(u_{\theta}) \right) 
%<  \inf \theta^2 \left(\frac 12 \|u|\|^2+T(u)\right)= \theta^2I_{\rho}
%\end{equation}
%and thus 
%\begin{equation}\label{sub}
%I_{\rho}<I_{\mu}+I_{\sqrt{\rho^2-\mu^2}}
%\end{equation}
%for any $\rho>0$
%and $\theta>1$.\\
Finally, notice that $\| \alpha_{n}(u_n-\bar u) \|_2 = \sqrt{\rho^{2}-\mu^{2}}$, therefore 
$$
I_{_{\sqrt{\rho^{2}-\mu^{2}}}} + I_{\mu} \leq  I_{\rho} + o(1)
$$
which is in contradiction with \eqref{(4)}. This implies that $\| \bar u \|_2= \rho$.

To prove the second assertion, we may assume, by the Ekeland variational principle, that $\set{u_n}$ is a Palais-Smale sequence
for the functional $I$. From $\bar u \in B_{\rho}$ it follows that $\| u_n-\bar u
\|_2 = o(1)$, hence it remains to show that $\| u_{n} - \bar
u \|_{D^{m,2}} = o(1)$ up to a sub-sequence.
By assumptions there exists a sequence $\set{\lambda_{n}}\subset
\R$ such that for the functional $I$ defined in (\ref{func1})
$$
<I'(u_{n}) - \lambda_{n}\, u_{n} , v > = o(1) \qquad \forall\, v \in H^{m}(\R^{N})
$$
where $<\cdot,\cdot>$ denotes the duality pairing. It follows that
$$
<I'(u_{n}) - \lambda_{n}\, u_{n} , u_{n} > = o(1)
$$
since $\| u_{n} \|_{H^m}$ is bounded. From this and assumption (\ref{(6)}) it follows that the sequence $\set{\lambda_{n}}$ is bounded, hence up to a sub-sequence there exists $\lambda \in \R$ with $\lambda_{n} \to \lambda$.

We now have
$$
<I'(u_{n}) - I'(u_{m}) - \lambda_{n}u_{n} + \lambda_{m} u_{m}\ ,\ u_{n} - u_{m}>  = o(1) \qquad \text{as}\ n,m\to \infty
$$
hence, using that $(\lambda_{n} - \lambda_{m}) <u_{m}, u_{n} - u_{m}> = o(1)$,
$$
\| u_{n} - u_{m} \|_{D^{m,2}}^{2} + <T'(u_{n}) - T'(u_{m}), u_{n} - u_{m}>
 - \lambda_{n} \| u_{n} - u_{m}\|_{2}^{2} = o(1)
$$
Since $\| u_{n} - u_{m}\|_2 = o(1)$, $\lambda_{n} \to
\lambda$ and (\ref{(5)}) holds, we obtain that $\set{u_{n}}$ is a
Cauchy sequence in $H^m(\R^N)$. Hence $\|u_{n} - \bar u\|_{H^m} \to 0$.
\end{proof}

\section{Proof of the Main Theorem}
We want to apply the previous theorem to the functional $I:H^1\rightarrow \R$
given by
\begin{equation*}
I(u)=\frac{1}{2}\int_{R^{3}} |\nabla u|^{2}dx+\frac{1}{4}\int_{\R^{3}} \phi_u |u|^2dx-\frac{1}{p}\int_{\R^{3}} |u|^pdx\,.
\end{equation*}
with 
$$T(u):=N(u)+M(u)$$
where
$$N(u)=\frac{1}{4}\int_{\R^{3}}\phi_{u }|u|^{2} dx ,\ \ \ M(u)=-\frac{1}{p}\int_{\R^{3}}|u|^{p}dx.$$

Before to prove the main theorem some preliminaries are in order: the next lemma shows that the the functional is bounded from below on $B_{\rho}.$ 
\begin{lem}\label{stimap}
If $2<p<\frac{10}{3}$, then for every $\rho>0$ the functional $I$ is bounded from below and coercive on $B_\rho$.
\end{lem}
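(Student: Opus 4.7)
The plan is to estimate the negative $L^p$-term by the Gagliardo–Nirenberg inequality and observe that in the range $2<p<10/3$ its gradient exponent is strictly less than $2$, so the positive kinetic term dominates. The Poisson term is non-negative and can simply be discarded for the lower bound.

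More precisely, I would first recall that for $2<p<6$ the Gagliardo–Nirenberg inequality gives
\begin{equation*}
\|u\|_{p}^{p}\le C\,\|\nabla u\|_{2}^{\alpha p}\,\|u\|_{2}^{(1-\alpha)p}\quad\text{with}\quad \alpha=\frac{3(p-2)}{2p},
\end{equation*}
so that $\alpha p = 3(p-2)/2$. For $u\in B_\rho$ this reads
\begin{equation*}
\|u\|_{p}^{p}\le C\,\rho^{\,p-\frac{3(p-2)}{2}}\,\|\nabla u\|_{2}^{\frac{3(p-2)}{2}}.
\end{equation*}
The key observation is that $p<10/3$ is exactly equivalent to $3(p-2)/2<2$, so the exponent of $\|\nabla u\|_2$ in this bound is strictly subquadratic.

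Next, since $\phi_u\ge 0$ one has $\int_{\R^3}\phi_u|u|^2\,dx\ge 0$, and therefore for every $u\in B_\rho$
\begin{equation*}
I(u)\;\ge\;\frac12\|\nabla u\|_{2}^{2}-\frac{C}{p}\,\rho^{\,p-\frac{3(p-2)}{2}}\,\|\nabla u\|_{2}^{\frac{3(p-2)}{2}}.
\end{equation*}
Setting $q:=3(p-2)/2\in(0,2)$ and $t:=\|\nabla u\|_2\ge0$, the right-hand side is a function of the form $\tfrac12 t^2-C(\rho)\, t^{q}$, which is bounded below on $[0,\infty)$ and tends to $+\infty$ as $t\to\infty$. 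This gives at once that $I$ is bounded below on $B_\rho$ and that $\|\nabla u_n\|_2$ stays bounded whenever $I(u_n)$ does; combined with the constraint $\|u_n\|_2=\rho$ this yields coercivity in the $H^{1}$-norm.

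There is no real obstacle here beyond keeping track of the exponent: the only step that matters is checking that $p<10/3$ makes the Gagliardo–Nirenberg exponent on $\|\nabla u\|_2$ lie strictly below $2$, because otherwise the quadratic kinetic term could not absorb the nonlinear term. The non-negativity of the Poisson contribution $\tfrac14\int\phi_u|u|^2\,dx$ makes it harmless for the lower bound and allows us to reduce the problem to the standard subcritical Gagliardo–Nirenberg estimate.
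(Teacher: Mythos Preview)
Your proof is correct and follows essentially the same route as the paper's own argument: both drop the non-negative Poisson term, apply the Gagliardo--Nirenberg inequality to control $\|u\|_p^p$ on $B_\rho$ by $\|\nabla u\|_2^{3(p-2)/2}$, and then observe that $3(p-2)/2<2$ precisely when $p<10/3$, so the quadratic kinetic term dominates. If anything, your write-up is a little more explicit about the non-negativity of $\phi_u$ and the one-variable analysis of $t\mapsto \tfrac12 t^2 - C(\rho)t^{q}$.
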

\begin{proof}
We apply the following Sobolev inequality 
\begin{equation*}\label{Sobolev}
||u||_q\leq b_q||u||_2^{1-\frac{N}{2}+\frac{N}{q}}||\nabla u||_2^{\frac{N}{2}-\frac{N}{q}}
\end{equation*}
that holds for $2 \leq q \leq 2^{*}$ when $N \geq 3$. Therefore
%From eq. (\ref{Sobolev}) we have that any function $u$ such that
if
$||u||_{2}=\rho$ it follows
%\begin{equation*}\label{sopra}
$||u||_p^p \leq b_{p,\rho} ||\nabla u||_2^{\frac{3p}{2}-3}$
%\end{equation*}
and
%Now we notice that, by eq. (\ref{sopra}), for all $u\in H^1$ with $||u||_{L^2}=\rho$
\begin{eqnarray*}
  I(u)&\geq & \int \left( \frac 12 |\nabla u|^2 -\frac{1}{p}|u|^{p}\right)dx\\
&\geq &\frac 12\|\nabla u\|^2_{2}-
b_{p,\rho}\|\nabla u\|^{\frac{3p}{2}-3}_{2}%\left(\int{|\nabla u|^2}dx\right)^{\frac{\gamma 3}{2}-3}-b_1\rho^2\nonumber.
\end{eqnarray*}
Since $p < \frac{10}{3}$, it results $\frac{3p}{2}-3<2$ and
\begin{equation*}
 I(u)\geq \frac{1}{2}||\nabla u||_2^2 +O(||\nabla u ||_2^2).
\end{equation*}
which concludes the proof.
\end{proof}
Notice that if we set   $u_{\lambda}(\cdot)=\lambda^{\alpha}u(\lambda^{\beta}(\cdot))$,
 $\alpha,\beta\in \R,\lambda>0$, then

\begin{equation*}
\phi_{u_{\lambda}}(x)%=\int_{\R^{3}}\frac{\lambda^{2\alpha}|u(\lambda^{\beta}y)|^{2}}{|x-y|}dy
=\int_{\R^{3}}\frac{\lambda^{2\alpha+\beta}|u(\lambda^{\beta}y)|^{2}}{|\lambda^{\beta}x-\lambda^{\beta}y|}dy
=\lambda^{2(\alpha-\beta)}\int_{\R^{3}}\frac{|u(y)|^{2}}{|\lambda^{\beta}x-y|}dy=\lambda^{2(\alpha-\beta)}\phi_{u} (\lambda^{\beta}x).
\end{equation*}
%
%Let us recall (see e.g. \cite{}) if we set   $u_{\lambda}(\cdot)=\lambda^{\alpha}u(\lambda^{\beta}(\cdot))$,
% $\alpha,\beta\ge0,\lambda>0$, then
%\[
%\phi_{u_{\lambda}}(\cdot)=\lambda^{2(\alpha-\beta)}\phi_{u} (\lambda^{\beta}(\cdot)).
%\]

Now we prove some subadditivity properties that are  crucial for the proof of Theorem \ref{MT1}. 
% \begin{lem}[Subadditivity lemma]
% Let we consider a function $f:(0,\rho]\rightarrow \R$
% such that
% \begin{equation}\label{condsub}
% f(\lambda \mu)< \lambda f(\mu) \text{ for all } 0<\mu<  \rho  \text{ and }\lambda \in (1, \frac{1}{\mu^{\alpha}}) \text{ with }  \ \alpha>1
% \end{equation}
% then
% \begin{equation}\label{sub}
% f(\rho)<f(\mu)+f(\rho-\mu) \text{ for all } 0<\mu<\rho. 
% \end{equation}
% \end{lem}
% \begin{proof}
% We first claim that 
% \begin{equation}\label{claim}
% \frac{f(x_1)}{x_1}<\frac{f(x_2)}{x_2} \text{ for all } x_2<x_1<1.
% \end{equation}
% We show how the claim implies \eqref{sub}. Taken $0<\mu<\rho-\mu<\rho$ we have
% thanks to \eqref{claim}
% \begin{eqnarray}
% &&\frac{f(\rho)}{\rho}<\frac{f(\mu)}{\mu} \nonumber\\
% &&\frac{f(\rho)}{\rho}<\frac{f(\rho-\mu)}{\rho-\mu}. \nonumber\\
% \end{eqnarray}
% By multiplication we get
% $$f(\rho)=\mu \frac{f(\rho)}{\rho}+(\rho-\mu)\frac{f(\rho)}{\rho}<f(\mu)+f(\rho-\mu).$$
% In order to prove the claim stated above let we fix $x_2<x_1<1$. We notice that
% if $x_1=\lambda x_2$ with $1<\lambda<\frac{1}{x_2^{\alpha}}$ we have
% $$\frac{f(x_1)}{x_1}=\frac{f(\lambda x_2)}{\lambda x_2}<\frac{f(x_2)}{x_2}.$$
% On the other hand if $x_1>x_2^{1-\alpha}$ we build the increasing sequence 
% $$y_1=x_2\frac{1}{x_2^{\frac{\alpha}{2}}}=x_2^{1-\frac{\alpha}{2}}, \ y_2=y_1\frac{1}{y_1^{\frac{\alpha}{2}}}=x_2^{(1-\frac{\alpha}{2})^2},..y_k=x_2^{(1-\frac{\alpha}{2})^k}.$$ 
% We have $y_k \rightarrow 1$ and $x_1<1$ hence
% $$\frac{f(x_2)}{x_2}>\frac{f(y_1)}{y_1}>\frac{f(y_2)}{y_2}...>\frac{f(x_1)}{x_1}.$$
% \end{proof}
\begin{lem}\label{fondamentale}
Let  $p=\frac{8}{3}$, then here exists $\rho_{1}>0$ such that $I_{\mu}<0$ for all $\mu \in (0, \rho_1)$ and  
\begin{equation*}
I_{\rho}<I_{\mu}+I_{\sqrt{\rho^2-\mu^2}}
\end{equation*}
for all % $\rho<\rho_1$ and
 $0<\mu<\rho<\rho_{1}$.\\
If $3<p<\frac{10}{3}$, then  there exists $\rho_{2}>0$ such that $I_{\mu}<0$ for all $\mu \in (\rho_2, +\infty)$
and 
\begin{equation*}
I_{\rho}<I_{\mu}+I_{\sqrt{\rho^2-\mu^2}}
\end{equation*}
for all $\rho>\rho_2$ and $0<\mu<\rho$.
\end{lem}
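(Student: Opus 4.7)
My plan is to establish the lemma in two steps. First, I will construct explicit test functions that witness $I_\mu<0$ on the stated $\mu$-range. Second, I will derive strict subadditivity from the strict monotonicity of $\rho\mapsto I_\rho/\rho^2$ on the interval where $I_\rho<0$, treating the complementary range by a short case distinction.

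For the first step, fix $w\in C_c^\infty(\R^3)$ with $\|w\|_2=1$ and consider the family $u_\sigma(x):=\mu\sigma^{3/2}w(\sigma x)\in B_\mu$, $\sigma>0$. Using the scaling identity recalled in the paper, one obtains
\begin{equation*}
I(u_\sigma) = \frac{\mu^2\sigma^2}{2}\|\nabla w\|_2^2 + \frac{\mu^4\sigma}{4}\int\phi_w|w|^2\,dx - \frac{\mu^p\sigma^{3p/2-3}}{p}\int|w|^p\,dx.
\end{equation*}
For $p=8/3$ the Poisson and $L^p$ exponents in $\sigma$ coincide, so $I(u_\sigma)=A\sigma^2+B\sigma$ with $B<0$ whenever $\mu$ is small enough that the $L^p$ contribution $\sim\mu^{8/3}$ dominates the Coulomb one $\sim\mu^4$; minimizing over $\sigma$ yields $I_\mu\le -c\mu^{10/3}<0$ for $\mu\in(0,\rho_1)$. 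For $3<p<10/3$ I set $\sigma=\mu^\gamma$ with $\gamma$ in the interval $\bigl((4-p)/(3p/2-4),\,2(p-2)/(10-3p)\bigr)$, which is nonempty precisely because $p>3$; on this interval the $L^p$ exponent $p+\gamma(3p/2-3)$ is strictly greater than both $2+2\gamma$ and $4+\gamma$, so $I(u_{\mu^\gamma})\to-\infty$ as $\mu\to+\infty$, producing $\rho_2>0$ with $I_\mu<0$ for every $\mu>\rho_2$.

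For the second step, the central claim is that $\rho\mapsto I_\rho/\rho^2$ is strictly decreasing on the interval where $I_\rho<0$. Granted this, for any $0<\mu<\rho$ with $\nu:=\sqrt{\rho^2-\mu^2}$ both in that interval, multiplying by $\mu^2,\nu^2>0$ and using $I_\rho<0$ gives $I_\mu>(\mu/\rho)^2 I_\rho$ and $I_\nu>(\nu/\rho)^2 I_\rho$; summing and using $\mu^2+\nu^2=\rho^2$ yields $I_\mu+I_\nu>I_\rho$. To prove the monotonicity, I start with a near-minimizer $u\in B_\mu$ and construct, for $t=\rho/\mu>1$, a trial function $v\in B_{t\mu}$ of the two-parameter form $v(x)=t^a u(t^b x)$ with $2a-3b=2$ (so $\|v\|_2=t\mu$). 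For $p\in(3,10/3)$ the choice $b=2$, i.e.\ $v=t^4 u(t^2 x)$, yields
\begin{equation*}
I(v)-t^2 I(u) = (t^{4p-6}-t^2)\bigl[\,h(t)(K(u)+P(u))-L(u)\,\bigr],\qquad h(t):=\frac{t^4-1}{t^{4p-8}-1},
\end{equation*}
where $K,P,L$ denote the three pieces of the functional. A direct computation shows $h$ is strictly decreasing on $(1,\infty)$ with $\sup h = 1/(p-2)<1$, while $L>K+P$ because $I(u)<0$; hence the bracket is bounded away from zero by at least $L(p-3)/(p-2)>0$ along any minimizing sequence, giving $I_{t\mu}\le I(v)<t^2 I_\mu$. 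For $p=8/3$, an analogous argument uses $v(x)=t\lambda^{3/2}u(\lambda x)$ with $\lambda$ optimized over $(0,\infty)$; the key inequality then reduces, via the Pohozaev identity $L=2K+P$ valid for (near-)minimizers, to $K/P>(t^{4/3}+t^{2/3})/2$, which is satisfied for small $\mu$ since the Step-1 test function already forces $K/P\sim\mu^{-4/3}$.

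The main obstacle is two-fold: (i) making the strict inequality $I(v)<t^2 I(u)$ quantitative and uniform along a minimizing sequence, so that strictness survives passage to the infimum; and (ii) handling the case in which $\mu$ or $\nu$ falls in the range where $I$ may fail to be negative (possible for $3<p<10/3$ when $\mu\le\rho_2$). The latter is dispatched by a case distinction: either $I$ is already negative in this extended range, in which case the monotonicity applies directly; or $I_\mu\ge 0$ there, and then $I_\mu+I_\nu\ge I_\nu>I_\rho$ whenever $\nu>\rho_2$ (by monotonicity on the negative-energy regime), while $I_\mu+I_\nu\ge 0>I_\rho$ if also $\nu\le\rho_2$.
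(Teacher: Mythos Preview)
Your argument for $3<p<\tfrac{10}{3}$ is correct and is essentially the paper's proof, only repackaged: your trial function $v=t^{4}u(t^{2}\cdot)$ is exactly the paper's scaling $u_\theta$ with $\beta=-2$, and your computation $I(v)-t^{2}I(u)=(t^{4p-6}-t^{2})[h(t)(K+P)-L]$ is an algebraic rearrangement of the paper's $f(\theta,u_n)$. The case distinction at the end is the same three-case split the paper makes. So on that range the two approaches coincide.

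The gap is in the $p=\tfrac{8}{3}$ case. Two issues:

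\medskip
\noindent\textbf{(a)} You invoke the ``Pohozaev identity $L=2K+P$ valid for (near-)minimizers'', but a generic minimizing sequence does \emph{not} satisfy this identity. It holds only for actual constrained critical points (equivalently, for $u$ already optimal under the $L^{2}$-preserving dilation $\lambda^{3/2}u(\lambda\cdot)$). You can repair this by first replacing each $u_n$ by its dilation-optimized version---then $L=2K+P$ holds exactly and the new sequence is still minimizing---but this step must be carried out, not assumed.

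\medskip
\noindent\textbf{(b)} More seriously, you justify $K/P\sim\mu^{-4/3}$ by saying ``the Step-1 test function already forces'' it. The Step-1 function is a particular trial function; it gives an upper bound on $I_\mu$ but says nothing directly about the ratio $K(u_n)/P(u_n)$ for the minimizing sequence. What actually controls this ratio is the pointwise inequality
\[
N(u)\ \le\ C\,\|u\|_{2}^{4/3}\,\|u\|_{8/3}^{8/3}
\]
(Hardy--Littlewood--Sobolev plus interpolation), valid for \emph{every} $u$; this gives $P\le C\mu^{4/3}L$ along the minimizing sequence, from which, together with (a), one can recover $K/P\gtrsim\mu^{-4/3}$. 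Without this inequality your argument is circular.

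\medskip
The paper's route for $p=\tfrac{8}{3}$ avoids both difficulties: it takes the pure amplitude scaling $u_\theta=\theta u$ ($\beta=0$), so the kinetic term factors out exactly with the $\theta^{2}$ and never enters the comparison; then the same inequality $N(u)\le C\mu^{4/3}\|u\|_{8/3}^{8/3}$ directly yields
\[
\frac{d}{d\theta}\Big|_{\theta=1}\!\big[I(u_\theta)/\theta^{2}\big]=2N(u_n)-\tfrac{1}{4}\|u_n\|_{8/3}^{8/3}\le \big(2C\mu^{4/3}-\tfrac14\big)\|u_n\|_{8/3}^{8/3}<0
\]
for small $\mu$, without any Pohozaev input or dilation optimization. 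This is both simpler and rigorous as written.
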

\begin{proof}
We define $u_{\theta}(x)=\theta^{1-\frac{3}{2}\beta}u(\frac{x}{\theta^{\beta}})$ (so that $\|u_{\theta}\|_2=\theta \|u\|_2$), then we have the following scaling 
laws:
\begin{equation}\label{scalingA}
A(u_{\theta}):=\frac 12\int_{\R^{3}} |\nabla u_{\theta}|^{2}dx=\theta^{2-2\beta}A(u) %\int |\nabla u|^{2}dx
\end{equation}
\begin{equation}\label{scalingN}
N(u_{\theta})=\frac {1}{4}\int_{\R^{3}} \phi_{u_{\theta}} |u_{\theta}|^{2}dx
=\theta^{4-\beta}N(u) %\int  \phi_u u^{2}dx
\end{equation}
\begin{equation}\label{scalingM}
M(u_{\theta})=-\frac{1}{p} \int_{\R^{3}} |u_{\theta}|^{p}dx
=\theta^{(1-\frac{3}{2}\beta)p+3\beta}M(u)\,. %\int | u|^{p}dx
\end{equation}
We get
\begin{equation*}
I(u_\theta)=\theta^2\left(I(u)+(\theta^{-2\beta}-1)A(u)+(\theta^{2-\beta}-1)N(u)+(\theta^{(1-\frac{3}{2}\beta)p+3\beta -2}-1)M(u)\right),
\end{equation*}
\begin{equation}\label{ff}
I(u_{\theta})=\theta^2\left(I(u)+f(\theta, u)\right)
\end{equation}
where 
\begin{equation}
f(\theta, u):=(\theta^{-2\beta}-1)A(u)+(\theta^{2-\beta}-1)N(u)+(\theta^{(1-\frac{3}{2}\beta)p+3\beta -2}-1)M(u). 
\end{equation}
We distinguish between the case $p=\frac{8}{3}$ and $3<p<\frac{10}{3}$.\\
\\
\textit{Case  $p=\frac{8}{3}$:}\\
\\
We notice that for $\beta=-2$ we get
$$I(u_{\theta})=\theta^6 A(u)+\theta^{6}N(u)+\theta^{4p-6}M(u)$$
and that $4p-6<6$ for $2<p<3$. Hence for $\theta\rightarrow 0 $ we have  $I(u_{\theta})\rightarrow 0^-$ which proves the first claim.\\
Let $u_n$ be a minimizing sequence in $B_{\mu}$ with $I_{\mu}<0$, then
\begin{eqnarray}
a_1<A(u_n)<a_2 \nonumber \\
m_1<M(u_n)<m_2. \nonumber
\end{eqnarray}
We get for $\beta=0$
$$f(\theta, u_n)=(\theta^{2}-1)N(u_n)+(\theta^{p-2}-2)M(u_n)
.$$
We have $\frac{d}{d \theta}f(\theta, u_n)|_{\theta=1}<0$ provided that
\begin{equation}\label{coo}
2N(u_n)+(2-\frac{8}{3})||u_n||_{L^{\frac{8}{3}}}^{\frac{8}{3}}<0.
\end{equation}
Relation \eqref{coo} holds for $\mu$ sufficently small 
recalling the following inequality
\begin{equation}
N(u_n)\leq C ||u_n||_2^{\frac{4}{3}}||u_n||_{L^{\frac{8}{3}}}^{\frac{8}{3}}.
\end{equation}
Indeed we have
$$2N(u_n)+(2-\frac{8}{3})||u_n||_{L^{\frac{8}{3}}}^{\frac{8}{3}}\leq ||u_n||_{L^{\frac{8}{3}}}^{\frac{8}{3}}(C\mu^{\frac{4}{3}}+(2-\frac{8}{3}))<0.$$
We notice that
$$\frac{d^2}{d \theta^2}f(\theta, u_n)=2N(u_n)-\frac{1}{p}(\frac{8}{3}-2)(\frac{8}{3}-3)\theta^{\frac{8}{3}-4}||u_n||_{L^{\frac{8}{3}}}^{\frac{8}{3}}>0,$$
and that
$$\frac{d}{d \theta}f(\theta, u_n)=0$$
for $\bar \theta$ fulfilling
$$\bar \theta^{-\frac{4}{3}}=2\frac{N(u_n)}{||u_n||_{L^{\frac{8}{3}}}^{\frac{8}{3}}}\leq C \mu^{\frac{4}{3}}$$
Thefore we find that for $\mu$ sufficently small $f(\theta, u_n)<0$ for
$\theta\in (1, \frac{C}{\mu})$.   We get 
$$I_{\theta \mu}< \theta^2 I(u_n)=\theta^2I_{\mu},$$
for $\theta \in (1,  \frac{C}{\mu})$.\\
Now  we argue as in \cite{L} observing that  for $\mu$ sufficently small 
\begin{multline*}
I_{\rho}=I_{\frac{\rho}{\mu}\mu}<\frac{\rho^2}{\mu^2}I_{\mu}=\frac{\rho^2-\mu^2+\mu^2}{\mu^2}I_{\mu}=\\\frac{\rho^2-\mu^2}{\mu^2}I_{\frac{\mu}{\sqrt{\rho^2-\mu^2}}\sqrt{\mu^2-\rho^2}}+I_{\mu}<I_{\mu}+I_{\sqrt{\rho^2-\mu^2}}.
\end{multline*}
\textit{Case  $3<p<\frac{10}{3}$:}\\
\\
We notice that for $\beta=-2$ we get
$$I(u_{\theta})=\theta^6 A(u)+\theta^{6}N(u)+\theta^{4p-6}M(u)$$
and that $4p-6>6$ for $3<p<\frac{10}{3}$. Hence for $\theta$ sufficently large we have $I(u_{\theta})<0$ which proves the first claim.\\
Let $u_n$ be a minimizing sequence in $B_{\mu}$ with $I_{\mu}<0$, then
\begin{eqnarray}
0<k_1<A(u_n)<k_2 \nonumber \\
0<\eta_1<|M(u_n)|<\eta_2. \nonumber
\end{eqnarray}
Indeed if $A(u_n)=o(1)$ we have $|M(u_n)|=o(1)$ and $I_{\mu}=0$.
For $\beta=-2$ we have
$$f(\theta, u_n):=(\theta^{4}-1)A(u)+(\theta^{4}-1)N(u)+(\theta^{4p-8}-1)M(u),$$
with $4p-8>4$ and 
\begin{equation}\label{asym2}
\frac{d}{d \theta}f(\theta, u_n)|_{\theta=1}<0,  \frac{d^2}{d \theta^2}f(\theta, u_n)<0 \text{ for all } \theta>1 .
\end{equation}
In order to show
\eqref{asym2} we have first
\begin{equation}\label{derivneg}
\frac{d}{d \theta}f(1, u_n)|_{\theta=1}=4 (A(u_n)+N(u_n))+(4p-8) M(u_n)<k<0.
\end{equation}
 Moreover we have
\begin{equation}\label{derivsecond}
 \frac{d^2}{d \theta^2}f(\theta, u_n)=12 \theta^{2}(A(u_n)+B(u_n))+(4p-8)(4p-9)\theta^{4p-10}M(u_n)<k<0.
\end{equation}
Thanks to \eqref{derivneg} and \eqref{derivsecond} we get 
$$f(\theta, u_n)<k(\theta)<0 \text{ for all } \theta>1, \ n\in \N,$$
and hence
$$I_{\theta \mu}< \theta^2 I(u_n)=\theta^2I_{\mu}.$$  
Let us suppose that $\mu<\sqrt{\rho^2-\mu^2}$. We distinguish three cases
\begin{itemize}
\item $\mu<\sqrt{\rho^2-\mu^2}<\rho_2$
\item $\mu<\rho_2<\sqrt{\rho^2-\mu^2}$
\item $\rho_2<\mu<\sqrt{\rho^2-\mu^2}$
\end{itemize}
The first case is trivial. For the second one
we have $I_{\sqrt{\rho^2-\mu^2}}>I_{\rho}$ and we conclude. For the third case we argue as for $p=\frac{8}{3}.$
\end{proof}

\begin{prop}\label{mma}
If $2<p<\frac{10}{3}$, then the functionals $N$
and $M$ fulfill \eqref{(2)}, \eqref{(3)}, \eqref{(5)}, \eqref{(6)}. 
\end{prop}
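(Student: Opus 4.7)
My plan is to verify conditions \eqref{(2)}, \eqref{(3)}, \eqref{(5)} and \eqref{(6)} separately for the two summands $N$ and $M$, exploiting the non-local quadratic-in-$|u|^2$ structure of $N$ and the pure-power local structure of $M$. Throughout, set $v_n:=u_n-\bar u$, which converges weakly to $0$ in $H^1(\R^3)$; by Rellich--Kondrachov $v_n\to 0$ in $L^r_{\mathrm{loc}}(\R^3)$ for every $r\in[2,6)$ and, up to a sub-sequence, a.e.\ in $\R^3$. The core tool is the Hardy--Littlewood--Sobolev (HLS) inequality $|D(f,g)|\le C\|f\|_{6/5}\|g\|_{6/5}$, where $D(f,g):=\int\!\!\int \frac{f(x)\,g(y)}{|x-y|}\,dx\,dy$, together with the Sobolev embedding $H^1(\R^3)\hookrightarrow L^{12/5}(\R^3)$, which yields $N(u)=\tfrac14 D(|u|^2,|u|^2)\le C\|u\|_{12/5}^4$.

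For \eqref{(2)}, the local piece $M$ is handled by the classical Brezis--Lieb lemma applied to the a.e.-convergent sub-sequence: $\|u_n\|_p^p=\|v_n\|_p^p+\|\bar u\|_p^p+o(1)$. For the nonlocal $N$, I would expand $|u_n|^2=|v_n|^2+|\bar u|^2+w_n$ with $w_n:=2\,\mathrm{Re}(v_n\,\overline{\bar u})$ and bilinearize $4N(u_n)=D(|u_n|^2,|u_n|^2)$, obtaining $4N(v_n)+4N(\bar u)$ plus four mixed cross-terms. The cross-term $D(|v_n|^2,|\bar u|^2)=\int \phi_{\bar u}|v_n|^2\,dx$ vanishes because $\phi_{\bar u}\in L^6(\R^3)$ (HLS applied to $|\bar u|^2\in L^{6/5}$) and $|v_n|^2\rightharpoonup 0$ in $L^{6/5}(\R^3)$ (a sequence bounded in $L^{6/5}$ whose a.e.\ limit is $0$). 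The three remaining mixed terms all involve $w_n$ and, via HLS, reduce to establishing $\|w_n\|_{6/5}\to 0$; this I would prove by splitting $\R^3$ into the compact set $\{|x|\le R,\,|\bar u|\le A\}$, where Rellich gives $v_n\to 0$ in $L^{12/5}$, and its complement, where $\|\bar u\|_{L^{12/5}}$ is made arbitrarily small by the decay of $\bar u\in L^{12/5}(\R^3)$.

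Conditions \eqref{(3)} and \eqref{(6)} are essentially formal. For \eqref{(3)}, $N$ and $M$ are positively homogeneous of degrees $4$ and $p$, hence $T(\alpha_n v_n)-T(v_n)=(\alpha_n^4-1)N(v_n)+(\alpha_n^p-1)M(v_n)=o(1)$, since $\alpha_n\to 1$ while both $N(v_n)$ and $|M(v_n)|$ are uniformly bounded by the $H^1$-boundedness of $v_n$ and the Sobolev/HLS bounds above. For \eqref{(6)}, $\langle N'(u_n),u_n\rangle=4N(u_n)$ and $\langle M'(u_n),u_n\rangle=-\|u_n\|_p^p$ are both bounded by a fixed power of $\|u_n\|_{H^1}$.

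Condition \eqref{(5)} is invoked in the proof of Lemma \ref{main-abs} only after $\|\bar u\|_2=\rho$ has been secured, so $u_n\to\bar u$ in $L^2(\R^3)$ strongly; interpolating with $H^1$-boundedness upgrades this to $u_n\to\bar u$ strongly in $L^r(\R^3)$ for every $r\in[2,6)$. For $M$, the pointwise inequality $||a|^{p-2}a-|b|^{p-2}b|\le C(|a|^{p-2}+|b|^{p-2})|a-b|$ (valid for $p\ge 2$) and H\"older give $|\langle M'(u_n)-M'(u_m),u_n-u_m\rangle|\le C\|u_n-u_m\|_p^2\to 0$. For $N$, decomposing $\phi_{u_n}u_n-\phi_{u_m}u_m=\phi_{u_n}(u_n-u_m)+(\phi_{u_n}-\phi_{u_m})u_m$ and combining $\|\phi_{u_n}-\phi_{u_m}\|_6\le C\|u_n+u_m\|_{12/5}\|u_n-u_m\|_{12/5}$ (HLS) with $\|\phi_{u_n}\|_6$ bounded yields $|\langle N'(u_n)-N'(u_m),u_n-u_m\rangle|\le C\|u_n-u_m\|_{12/5}^2\to 0$. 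The main obstacle is the splitting \eqref{(2)} for the nonlocal $N$, where the mixed cross-terms force a careful interplay of HLS, Rellich compactness, and a tail estimate exploiting the $L^{12/5}$-decay of $\bar u$; the remaining three conditions follow either from homogeneity, Sobolev boundedness, or the classical elementary inequality for $|u|^{p-2}u$.
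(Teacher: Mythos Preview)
Your proof is correct and follows the same strategy as the paper: homogeneity for \eqref{(3)} and \eqref{(6)}, interpolation plus H\"older/HLS for \eqref{(5)} after the $L^2$-convergence has been secured, and Brezis--Lieb for \eqref{(2)}. The only difference is that for the nonlocal splitting \eqref{(2)} of $N$ the paper simply cites \cite{ZZ}, whereas you supply a direct bilinearization-and-cross-term argument via HLS and a tail estimate; both are standard and lead to the same conclusion.
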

\begin{proof}
By Lemma \ref{stimap}, any minimizing sequence is bounded in the
$H^1$-norm. Hence $\set{u_n}$ is bounded in all $L^s$ norms for $s
\in [2,2^*]$ and there exists $\bar u \in H^{1}(\R^{3})$ such that $u_n
\rightharpoonup \bar u$ in $H^{1}(\R^{3}).$

 The functionals $M$ and $N$ satisfy the condition \eqref{(2)} (see \cite{BL} and Lemma 2.2 in 
 \cite{ZZ}) .

We have, by the convolution and Sobolev inequalities
%We recall that for the functional $N$ we have
$$
N(u_{n})=\int_{\R^{3}} \phi_{u_{n}}u_{n}^{2}dx\le C \|u_{n}\|_{12/5}^{4}\le C\|u_{n}\|_{2}^{3}\|\nabla u_{n}\|_{2}
$$
and than the relation \eqref{(3)}  follows from
\begin{eqnarray}
&&N(\alpha_n(u_n-\bar u))-N( u_n-u)=(\alpha_n^4-1)N(u_n -\bar u)=o(1) \nonumber  \\ 
&&M(\alpha_n(u_n-\bar u))-M( u_n-u)=(\alpha_n^p-1)M(u_n -\bar u)=o(1) \nonumber
\end{eqnarray}
%for $\set{\alpha_{n}}\subset \R\ $ such that $ \alpha_{n} \to 1 $.
since $\alpha_{n}\rightarrow1.$
Notice that thanks to the  classical interpolation inequality we have
\begin{equation*}
||u_n-u_m||_p\leq ||u_n-u_m||_2^{\alpha}||\nabla u_n-\nabla u_m||_2^{1-\alpha}\ \  \text{ where } \frac{\alpha}{2}+\frac{(1-\alpha)}{2^*}=\frac{1}{p}
\end{equation*}
and then on the minimizing sequence we get
\begin{equation*}
||u_n-u_m||_p=o(1).
\end{equation*}
We obtain, for $q=p/(p-1)$
\begin{equation*}
\int_{\R^{3}} |u_n|^{p-1}|u_n-u|dx \leq \left( \int_{\R^{3}} |u_{n}|^{q}\, dx \right)^{\frac 1 q}\ \left( \int_{\R^{3}} |u_{n}-u_{m}|^{p}\, dx \right)^{\frac 1 p}=o(1)
\end{equation*}
and then 
$$
\left| \int_{\R^{3}} (|u_{n}|^{p-1} - |u_{m}|^{p-1})(u_{n}-u_{m})\, dx \right| \le C\ \| u_{n}-u_{m} \|_{p}=o(1).
$$
This proves \eqref{(5)} for $M$.
The verification of  (\ref{(5)}) for $N$ follows from
\begin{eqnarray*}
\int_{\R^{3}}\phi_{u_{n}}u_{n}(u_{n}-u_{m})dx&\le&\|\phi_{u_{n}}\|_{6}\|u_{n}\|_{2}\|u_{n}-u_{m}\|_{3} \\
&\le&C\|{u_{n}}\|^{2}_{H^{1}(\R^{3})}\|u_{n}\|_{2}\|u_{n}-u_{m}\|_{3}=o(1)
\end{eqnarray*}
%for $N$ has been proved 
%incomes from the weakly continuity of $N'$ as proved
 %in \cite{ZZ}. 
 Then condition (\ref{(6)}) is trivial.
\end{proof}

\medskip

Now we can conclude the proof of Theorem \ref{MT1}.
In case $p=\frac{8}{3}$ we can fix $\rho \in (0, \rho_1)$ due to the fact
that $I_{\rho}<0$ for all $\rho \in (0,\rho_1)$. In case $3<p<\frac{10}{3}$
we fix $\rho \in (\rho_2,+ \infty)$.\\
Let $\{u_n\}$ be a minimizing sequence  in $%H^1(\R^3)\cap
 B_{\rho}$.
Notice also that for any sequence $y_n\in \R^n$ we have that
$u_n(.+y_n)$ is still a minimizing sequence for $I_{\rho}$.
This implies that the proof of the Theorem can be concluded 
provided that we show the existence of a sequence $y_n\in \R^3$ such that
the weak limit of $u_n(.+y_n)$ belongs to $B_\rho$ and that the convergence is strong in $H^1(\R^3)$. Notice that if 
\begin{equation*}\label{nonvani}
\lim_{n \rightarrow \infty} \left (\sup_{y\in \R^n} \int_{B(y,1)} |u_n|^2 dx \right )
=0
\end{equation*}
then $u_n\rightarrow 0$ in $L^{q}(\R^3)$ for any $q \in \left (2, 2^* \right )$, where $B(a,r)=\{x\in \R^{3}: |x-a|\le r\}$.
Since $I_{\rho}<0$ we have  that
\begin{equation*}\label{nelben}
\sup_{y\in \R^n} \int_{B(y,1)} |u_n|^2 dx \geq \mu>0.
\end{equation*}
In this case
we can choose $y_n\in \R^3$ such that
$$\int_{B(0,1)} |u_n (.+y_n)|^2 dx\geq \mu>0$$
and hence, due to the compactness of the embedding $H^1(B(0,1))\subset L^2(B(0,1))$,
we deduce that the weak limit of the sequence $u_n (.+y_n)$ is not the trivial function, so  $u_n\rightharpoonup\bar u \neq 0$.
Since the subadditivity condition holds,  we  can apply the abstract Lemma \ref{main-abs} and conclude the proof.

\section{The orbital stability}
In this section we prove Theorem \ref{stabil} following the ideas of \cite{CL}. First of all we recall
the definition of orbital stability.
%\begin{definition}
%%We say that a subset $\Omega\subset H^{1}(\R^{3};\C)$ is stable if for any $\varepsilon>0$ there exists a $\delta>0$ with the following property:
%A subset ${\mathcal O}\subset H^{1}(\R^{3};\C)$ is said to be {\sl stable} if for every $\varepsilon>0$ there exists a $\delta>0$
%with the following property: if $\psi_{0}\in H^{1}(\R^{3};\C)$ and $\psi(t)$ is the solution of the Cauchy problem with 
%initial datum
%$\psi(0)=\psi_{0}$ satisfies $\inf_{\eta\in \mathcal O}\|\psi_{0}-\eta\|_{H^{1}}<\delta$, then
%$$\forall\, t>0\ \ \inf_{\eta\in\mathcal O}\|\psi(t)-\eta\|_{H^{1}}<\varepsilon.$$

%Let $\phi\in H^{1}$ be  a solution of \eqref{ell}. We shall say that the standing waves $e^{i\omega t}\phi$ is orbitally stable if 
%the orbit of $\phi$,
%$\mathcal O(\phi)=\{e^{i\theta}\phi(.-y):\theta\in\R, y\in\R^{3}\}$, is stable.

%\end{definition}
We define
$$S_{\rho}=\{e^{i\theta }
u(x): \theta\in [0,2\pi), \|u\|_{2}=\rho, \ 
 I(u)=I_{\rho}\}.$$
We say that $S_{\rho}$ is {\sl orbitally stable} if
for every $\varepsilon>0$ there exists $\delta>0$ such that for any $\psi_{0}\in H^{1}(\R^{3})$ with
$\inf_{v\in S_{\rho}}\|v-\psi_{0}\|_{H^{1}(\R^{3};\C)}<\delta$ we have
$$\forall \, t>0 \ \ \ \inf_{v\in S_{\rho}
} \|\psi(t,.)-v\|_{H^{1}(\R^{3};\C)}<\varepsilon$$
where $\psi(t,.)$ is the solution of \eqref{SP} with initial datum $\psi_{0}$. We notice explicitly that $S_{\rho}$ is invariant by translation, i.e.
if $v\in S_{\rho}$ then also $v(.-y)\in S_{\rho}$ for any $y\in \R^{3}.$\\
We recall that the energy and the charge associated to $\psi(x,t)$ evolving according to \eqref{SP} are given by
\begin{eqnarray*}\label{energy}
E(\psi(x,t)):&=&\frac{1}{2}\int_{\R^3} |\nabla \psi|^2dx+\frac{1}{4}\int_{\R^3} (|x|^{-1}*|\psi|^{2}) |\psi|^2dx-\frac{1}{p}\int_{\R^3} |\psi|^pdx\\
&=&E(\psi(x,0))\nonumber
\end{eqnarray*}
and
\begin{equation*}\label{charge}
C(\psi(x,t)):=\frac{1}{2}\int_{\R^3} |\psi|^2dx=C(\psi(x,0)).
\end{equation*}
% These are conserved quantity: for any solution  $\psi(x,t)$ of \eqref{SP} with initial data $\psi_0$ we have
% \begin{eqnarray}\label{conservation}
% E(\psi(x,t))=E(\psi_0) \ \ \text{ and }\ \ C(\psi(x,t))=C(\psi_0).
% \end{eqnarray}
% Notice that for a stationary solution $\psi(x,t)=u(x)e^{-i \omega t}$ we have
% \begin{equation*}\label{energystanding}
% E(u(x)e^{i \omega t})=I(u(x))
% \end{equation*}
\\
So our action functional $I$ is exactly the energy.
In order to prove Theorem \ref{stabil} we argue by contradiction
assuming that there exists a $\rho$ such that $S_{\rho}$ is not orbitally stable. This means that there exists $\varepsilon>0$ and a sequence of initial
data $\{\psi_{n,0}\}\subset H^{1}(\R^{3})$ and $\{t_{n}\}\subset\R$ such that the maximal solution $\psi_{n}$, which is global
and $\psi_{n}(0,.)=\psi_{n,0}$, satisfies
\begin{equation*}
\lim_{n\rightarrow +\infty}\inf_{v\in S_{\rho}}\|\psi_{n,0}-v\|_{H^{1}(\R^{3})}=0 \ \ \ \text{ and }\ \ \inf_{v\in S_{\rho}}\|\psi_{n}(t_{n},.)-v\|_{H^{1}(\R^{3})}\ge\varepsilon
\end{equation*}
Then there exists $u_{\rho}\in H^{1}(\R^{3})$ minimizer of $I_{\rho}$  and $\theta\in \R$ such that $v=e^{i\theta}u_{\rho}$
and
$$\|\psi_{n,0}\|_{2}\rightarrow\|v\|_{2}=\rho\ \ \text{ and }\ \ \
I(\psi_{n,0})\rightarrow I(v)%=I(u_{\rho})
=I_{\rho}
%I(\psi_{n,0})=I(|\psi_{n,0}|)\rightarrow I(v)=I_{\rho}
$$

Actually we can assume that
$\psi_{n,0}\in B_{\rho}$ (there exist $\alpha_{n}=\rho/\|\psi_{n,0}\|_{2}\rightarrow1$  so that
$\alpha_{n}\psi_{n,0}\in B_{\rho}$ and $ I(\alpha_{n}\psi_{n,0}) %-E(\psi_{n,0}) 
\rightarrow I_{\rho}$, i.e. we can replace $\psi_{n,0}$ with
 $\alpha_{n}\psi_{n,0}$). 
 
 %Thanks to \eqref{energy} we see that 
 So  $\{\psi_{n,0}\}$ is a minimizing sequence for $I_{\rho}$, and since
 $$I(\psi_{n}(.,t_{n}))=I(\psi_{n,0}),$$ 
also $\{\psi_{n}(.,t_{n})\}$ is a minimizing sequence for $I_{\rho}$. %  We write $\psi_{n}(t_{n},.)=e^{i\chi_{n}(x)}|\psi_{n}(t_{n},.)|$
%  for some real functions $\chi_{n}.$ 
Since we have proved that every minimizing sequence has a subsequence converging (up to translation) in $H^{1}$-norm to a minimum on the sphere
$B_{\rho},$ % let us say $w_{\rho}$, 
we readily have a contradiction. 

Finally notice that, since in general, if $\psi(x,t)=|\psi(x,t)|e^{iS(x,t)}$ then $$I(\psi(x,t))=I(|\psi(x,t)|)+\int_{\R^{3}}|\psi(x,t)|^{2}|\nabla S(x,t)|^{2}dx,$$
we easily conclude that the minimizer $u_{\rho}$ has to be real valued.
% $$|\psi_{n}(.,t_{n})|\rightarrow w_{\rho} \ \ \text{ in } \ \ H^{1}(\R^{3};\R).$$
 %This gives a contradiction since 
 
  %with $w\in S_{\rho}$
% Moreover $I$ is just the energy which is conserved
% i.e. $I(\psi_{n}(t_{n},.))\rightarrow I_{\rho}$. We can assume that it is also a Palais-Smale sequence for $I$ on the constraint 
% $B_{\rho}$ and then,
% as it is standard to prove, for the functional
% $$I_{\lambda}(u)=I(u)-\lambda\int_{\R^{3}}u^{2}dx\,, \ \ \ u\in H^{1}(\R^{3})$$
% where $\lambda$ is a suitable Lagrange multipplier.
\section{Application to a biharmonic Schr\"odinger equation}
In this final section we apply the above abstract result to the
following Schrodinger equation involving the biharmonic operator
\begin{equation}\label{NLSVQbi}
i\psi_{t} - \Delta^2 \psi  -F'(|\psi|) \frac{\psi}{|\psi|}=0,
\hbox{ } (t,x) \in \R\times \R^N \ \ \ N>4.
\end{equation}

The search of standing wave solution $\psi(x,t)=u(x)e^{-i \omega t}$ lead us to study the following semilinear equation
\begin{equation}\label{NLSVQbiell}
\Delta^2 u+ F(u)=-\omega u
\end{equation}
which will be studied by minimizing 
%Finally,  taken a minimizer for \eqref{Jrop} we get a couple
% $(\omega_{\rho}, u_\rho)\in\R\times H^{2}(\R^{N};\C)$, where $\omega_{\rho}$ is the Lagrange multiplier associated to the critical point $u_{\rho}$ on $B_{\rho}$.
%We have that  $(\omega_{\rho}, u_\rho)$  that is solution of 
%
%
%\begin{equation}\label{NLSVQbiell}
%\Delta^2 u+ F(u)=-\omega_{\rho} u
%\end{equation}
%and then we have the orbital stability of the standing wave $u(x)e^{i \omega_{\rho} t}$%  for the following Schrodinger equation involving the biharmonic operator
%\begin{equation}\label{NLSVQbi}
%i\partial_t \psi - \Delta^2 \psi  -F'(|\psi|) \frac{\psi}{|\psi|}=0,
%\hbox{ } (t,x) \in \R\times \R^N
%\end{equation}
% 
%In this section we show an application of the abstract lemma to
 the functional  $J:H^2(\R^N)\rightarrow \R$ given by
\begin{equation*}
J(u)=\frac{1}{2} \int_{\R^{N}} |\Delta u|^{2}dx+\int_{\R^N}F(u)dx
\end{equation*}
on $B_{\rho}=\{u\in H^{2}(\R^{N}):\|u\|_{2}=\rho\}$, namely studing the minimization problem
\begin{equation*}
J_{\rho}=\min_{B_{\rho}}J(u)
\end{equation*}
where $\omega$ is seen as the Lagrange multiplier.
\medskip

We make the following hypothesis on the nonlinearity
\begin{equation}\tag{$F_{p}$}\label{Fp}
\begin{array}{ll}
|F^{\prime }(s)|\leq c_{1}|s|^{q}+c_{2}|s|^{p}& \text{ for some
} \ \ 2<q\leq p<\frac{N+4}{N-4}  %\ \   \text{ if }  N>4 \vspace{0.1cm}\\
 %&\text{ for some $2< p, q$  \ \ if  $N\le4$}
\end{array}
\end{equation}
%We ask also that
\begin{equation}\tag{$F_{0}$}\label{F0}
F(s)\geq -c_1s^{2} -c_2s^{2+\frac{4}{N}}\ \ \text{ with }\ \ c_1, c_2> 0 
\end{equation}
%and 
\begin{equation}
\exists s_{0}\in (0, +\infty) \ \ \text{ such that }\ \ F(s_{0})<0.
\tag{$F_{1}$} \label{F1}
\end{equation}
So we get the following result.
\begin{thm}\label{minNLSapp}
Let \eqref{Fp}, \eqref{F0} and \eqref{F1} hold. Then there exists $\rho_0$ such that for all $\rho>\rho_0$,
$J_{\rho}$ is achieved on $u_{\rho}$ and $(\omega_{\rho}, u_{\rho})$ is a solution of \eqref{NLSVQbiell}.
\end{thm}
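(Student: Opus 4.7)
The plan is to apply Lemma \ref{main-abs} with $m=2$ and $T(u) = \int_{\R^N} F(u)\,dx$, taking the $D^{2,2}$-seminorm as $\|\Delta u\|_2$ (an equivalent choice). The proof mirrors Sections 2--3: one must verify (i) coercivity and lower boundedness of $J$ on $B_\rho$; (ii) $J_\rho<0$ for $\rho$ large; (iii) the conditions \eqref{(2)}, \eqref{(3)}, \eqref{(5)}, \eqref{(6)} for $T$; (iv) the strict subadditivity \eqref{(4)}; and (v) non-vanishing of the translated minimizing sequence.

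Step (i) follows from $F_0$ combined with a Gagliardo--Nirenberg inequality tuned to the biharmonic scaling: on $B_\rho$ one obtains $\|u\|_{2+4/N}^{2+4/N} \leq C\rho^{1+4/N}\|\Delta u\|_2$, where the exponent $1$ on $\|\Delta u\|_2$ comes from the fact that $2+4/N$ is strictly subcritical for $\Delta^2$ (whose $L^2$-critical exponent is $2+8/N$, since $N>4$). Hence $J(u) \geq \tfrac12\|\Delta u\|_2^2 - C_\rho\|\Delta u\|_2 - c_1\rho^2$, which is coercive and bounded below. For step (ii), use $F_1$ to choose $v\in H^2(\R^N)$ with $\int F(v)\,dx<0$ (take $v\equiv s_0$ on a large ball and smoothly truncate). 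The dilation $v_L(x) = v(x/L)$ yields $\|v_L\|_2^2 = L^N\|v\|_2^2$, $\|\Delta v_L\|_2^2 = L^{N-4}\|\Delta v\|_2^2$ and $\int F(v_L)\,dx = L^N\int F(v)\,dx$; imposing $\|v_L\|_2=\rho$ determines $L=L(\rho)$ and produces $J(v_L) = C_1\rho^{2-8/N} + C_2\rho^2$ with $C_2<0$, so that $J_\rho\to-\infty$ as $\rho\to\infty$. This furnishes the threshold $\rho_0$ and, via Lions' concentration lemma applied to the bounded minimizing sequence, yields the non-vanishing required in step (v). Step (iii) is routine under the growth condition $F_p$: the embedding $H^2(\R^N) \hookrightarrow L^q$ for $2\leq q \leq 2N/(N-4)$ controls every nonlinear integral on bounded sequences, \eqref{(2)} follows from a Brezis--Lieb argument applied to $F$ exactly as in Proposition \ref{mma}, \eqref{(3)} from \eqref{(2)} together with $\alpha_n\to 1$ and continuity of $T$, \eqref{(5)} from Hölder estimates on $u_n - u_m$ (which are $o(1)$ in every subcritical $L^r$), and \eqref{(6)} from boundedness of $\{u_n\}$.

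The main obstacle is the strict subadditivity \eqref{(4)}. Following Lemma \ref{fondamentale}, the strategy is to introduce a two-parameter family $u_\theta(x) = \theta^{\alpha} u(\theta^\beta x)$ with $\alpha=\beta N/2 + 1$ so that $\|u_\theta\|_2 = \theta\|u\|_2$, and to choose $\beta$ so that the scaling identity
$$J(u_\theta) = \theta^2\bigl(J(u) + f(\theta,u)\bigr)$$
has $f(\theta, u_n) < 0$ for $\theta>1$ along any minimizing sequence in $B_\mu$ with $J_\mu<0$. This is more delicate than in the pure-power case, because $F$ is not homogeneous: the sign of $f$ and its derivatives must be extracted from $J_\mu<0$ and the two-sided bounds $F_0, F_p$, and one likely has to split into cases according to whether $\mu$ and $\sqrt{\rho^2-\mu^2}$ lie above $\rho_0$. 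Once the scaling inequality $J_{\theta\mu} < \theta^2 J_\mu$ is established, \eqref{(4)} follows from the same three-case analysis as at the end of Lemma \ref{fondamentale}. Lemma \ref{main-abs} then delivers strong convergence of the translated minimizing sequence to a minimizer $u_\rho \in B_\rho$, and the Lagrange multiplier $\omega_\rho$ gives the solution $(\omega_\rho, u_\rho)$ of \eqref{NLSVQbiell}.
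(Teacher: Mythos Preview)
Your overall strategy matches the paper's: verify coercivity, negativity of $J_\rho$ for large $\rho$, the splitting/continuity conditions on $T$, non-vanishing, and then subadditivity, after which Lemma \ref{main-abs} applies. Steps (i)--(iii) and (v) are handled essentially as in the paper (Propositions \ref{boundedness}, \ref{subadditivity} and the analogue of Proposition \ref{mma}).

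Where you diverge is step (iv), and here you make the problem harder than it is. You propose a general rescaling $u_\theta(x)=\theta^{1+N\beta/2}u(\theta^\beta x)$ and then worry that, since $F$ is not homogeneous, controlling the error $f(\theta,u)$ will require extracting sign information from $(F_0)$ and $(F_p)$ and splitting into cases. That concern is legitimate for a generic $\beta$: if the amplitude factor $\theta^{1+N\beta/2}$ is nontrivial, then $\int F\bigl(\theta^{1+N\beta/2} u\bigr)\,dx$ has no clean scaling, and the one-sided bounds $(F_0)$, $(F_p)$ are too crude to determine the sign of $f(\theta,u_n)$. As written, your argument for subadditivity is therefore incomplete.

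The paper's observation is that the particular choice $\beta=-2/N$ kills the amplitude factor, i.e.\ one uses the pure dilation $u_\lambda(x)=u(x/\lambda^{2/N})$ with $\|u_\lambda\|_2=\lambda\|u\|_2$. Then
\[
T(u_\lambda)=\int_{\R^N}F\!\bigl(u(x/\lambda^{2/N})\bigr)\,dx=\lambda^{2}\,T(u)
\]
\emph{exactly}, for any $F$ whatsoever, while the kinetic part scales sub-quadratically as $\lambda^{2-8/N}\|u\|_{D^{2,2}}^2$. Hence
\[
J(u_\lambda)=\lambda^2 J(u)+\tfrac12\bigl(\lambda^{2-8/N}-\lambda^2\bigr)\|u\|_{D^{2,2}}^2,
\]
and for $\lambda>1$ the correction term is strictly negative as soon as $\|u\|_{D^{2,2}}>0$. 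Along any minimizing sequence on $B_\mu$ with $J_\mu<0$ the kinetic term is bounded below by a positive constant (otherwise Gagliardo--Nirenberg forces $T(u_n)\to 0$ and $J_\mu=0$), so $J_{\lambda\mu}<\lambda^2 J_\mu$ for all $\lambda>1$ and all $\mu$ with $J_\mu<0$. The strict subadditivity \eqref{(4)} then follows by the same Lions-type manipulation and three-case split as at the end of Lemma \ref{fondamentale}. No appeal to $(F_0)$ or $(F_p)$ is needed in this step, and the nonhomogeneity of $F$ plays no role.
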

In order to apply the astract Lemma \ref{main-abs} to the functional
\begin{equation*}\label{eqbilap}
J(u)=\frac{1}{2}||u||_{D^{2,2}}^2+T(u) \ \ \ \text{ where }\  \ T(u)=\int_{\R^{N}} F(u)dx
\end{equation*}
%where
%$$T(u)=\int_{\R^{N}} F(u)dx$$
we need to prove the boundedness of $J$ on $B_{\rho}$ and the subadditivity condition.
\begin{prop}\label{boundedness}
If \eqref{Fp}, \eqref{F0} and \eqref{F1} hold, then there exists $\rho_0$ such that for all $\rho>\rho_0$ 
\begin{itemize}
\item $-\infty<J_{\rho}<0$;
\item any
minimizing sequence $\{u_n\}\subset  B_{\rho}$ for $J$
is bounded in $H^2(\R^N)$.
\end{itemize}
\end{prop}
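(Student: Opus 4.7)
My plan is to address both bullets with separate arguments. The first---$J_\rho>-\infty$ plus $H^2$-boundedness of minimizing sequences---is a single uniform coercivity statement on $B_\rho$. Starting from the pointwise lower bound $(F_0)$, I get
$$
J(u)\,\ge\, \tfrac12\|\Delta u\|_2^2 - c_1\|u\|_2^2 - c_2\|u\|_{2+4/N}^{2+4/N}.
$$
The key tool is the Gagliardo--Nirenberg inequality in $H^2(\R^N)$,
$$
\|u\|_{2+4/N}^{2+4/N}\,\le\, C\,\|u\|_2^{1+4/N}\,\|\Delta u\|_2,
$$
in which the exponent of $\|\Delta u\|_2$ is exactly $1$ because $2+4/N$ is strictly below the biharmonic mass-critical threshold $2+8/N$. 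On $B_\rho$, using $\|u\|_2=\rho$ and absorbing the linear $\|\Delta u\|_2$ term via Young's inequality, I would conclude $J(u)\ge \tfrac14\|\Delta u\|_2^2 - C(\rho)$. This gives $J_\rho>-\infty$, and for any minimizing sequence $\{u_n\}$ it yields $\|\Delta u_n\|_2\le C$, hence $\|u_n\|_{H^2}\le C$ since $\|u_n\|_2=\rho$.

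For the second bullet ($J_\rho<0$ when $\rho$ is large) I would use a scaling argument based on $(F_1)$. First I construct $w\in C_0^\infty(\R^N)$ with $\int_{\R^N} F(w)\,dx<0$: take $w=s_0\,\eta_\delta$ where $\eta_\delta\in C_0^\infty(\R^N)$ equals $1$ on $B(0,1)$ and vanishes outside $B(0,1+\delta)$. The core contributes $F(s_0)\,\omega_N<0$ by $(F_1)$, while the transition annulus has volume $O(\delta)$ and contributes $O(\delta)$ by continuity of $F$; so for $\delta$ small enough, $\int F(w)\,dx<0$. Then I rescale via $v_t(x):=w(x/t)$, for which
$$
\|v_t\|_2^2=t^N\|w\|_2^2,\qquad \|\Delta v_t\|_2^2=t^{N-4}\|\Delta w\|_2^2,\qquad \int F(v_t)\,dx=t^N\int F(w)\,dx,
$$
giving
$$
J(v_t)=\tfrac12 t^{N-4}\|\Delta w\|_2^2 \,+\, t^N\int F(w)\,dx.
$$
Since $N>4$, the negative $t^N$ term eventually dominates the positive $t^{N-4}$ term. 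Setting $t=(\rho/\|w\|_2)^{2/N}$ places $v_t$ in $B_\rho$, so $J_\rho\le J(v_t)<0$ as soon as $\rho$ exceeds a threshold $\rho_0$ depending on $w$.

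The delicate ingredient is the sharpness in the first step: if $(F_0)$ allowed growth above the biharmonic mass-critical exponent $2+8/N$, the Gagliardo--Nirenberg estimate would produce a power of $\|\Delta u\|_2$ strictly greater than $2$ and no lower bound would survive---exactly as in focusing $L^2$-supercritical problems. The assumption $(F_0)$ is calibrated to sit safely below this threshold, which is what makes both $J_\rho>-\infty$ and the coercivity go through. The upper bound part, by contrast, is essentially a soft argument: once one test function $w$ with $\int F(w)\,dx<0$ is available, the $t^N$ vs.\ $t^{N-4}$ comparison (valid precisely because $N>4$) produces the required threshold $\rho_0$ automatically.
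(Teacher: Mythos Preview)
Your proof is correct. For the lower bound and coercivity you spell out the Gagliardo--Nirenberg argument that the paper simply outsources to the reference \cite{BV}; your computation of the exponent $\theta q=1$ for $q=2+4/N$ is right and gives exactly the coercivity you claim (note though that $(F_0)$ sits well below the threshold $2+8/N$, so the step is not delicate---any exponent strictly below $2+8/N$ would do).

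For $J_\rho<0$ the paper and you use the same core idea---a plateau at height $s_0$ where $F(s_0)<0$---but execute it differently. The paper builds explicit radial functions $u_n$ equal to $s_0$ on $B(0,R_n)$ with a $\cos^2$ transition on an annulus of fixed width~$1$, and lets $R_n\to\infty$: the kinetic energy lives on the annulus and is $O(R_n^{N-1})$, while the potential contributes $F(s_0)\,O(R_n^N)<0$, so the comparison is $R_n^N$ versus $R_n^{N-1}$ and works in any dimension. Your route fixes a single smooth plateau $w$ with $\int F(w)<0$ and then dilates, which makes the scaling balance $t^N$ versus $t^{N-4}$ completely transparent but uses the standing hypothesis $N>4$ explicitly. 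Both are valid; your version is slightly cleaner and makes the connection to $\rho$ more direct via $t=(\rho/\|w\|_2)^{2/N}$.
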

\begin{proof}
By arguing as in \cite{BV}  we have that $J_{\rho}>-\infty$ and that the functional is coercive.
We build a sequence of  radial functions $\{u_n\}$ in $H^{2}(\R^N)$  such
that
$J(u_n)<0$ for large $n$.
The sequence is defined as follows:
\begin{equation*}
u_n(r)=
\left\{
\begin{array}{lll}
s_0&&r<R_n;\\
s_0\cos^2(\frac{\pi}{2}(r-R_n))&&R_n\leq r\leq R_n+1;\\
0&& r>R_n+1.
\end{array}
\right.
\end{equation*}

We show that
$J(u_n)<0$ when $R_n \rightarrow +\infty$. Notice that for a radial function
$u$ the laplacian is given by
\begin{equation*}\label{proplapl}
\Delta u=\frac{\partial^2 u}{\partial r^2}+\frac{\partial u}{\partial r}
\left(\frac{N-1}{r}\right).
\end{equation*}
After some computation we have
\begin{equation*}
\frac{\partial u_n(r)}{\partial r}=
\left\{
\begin{array}{lll}
0&&r<R_n;\\
-\pi s_0 \cos(\frac{\pi}{2}(r-R_n)\sin(\frac{\pi}{2}(r-R_n))&&R_n\leq r\leq R_n+1;\\
0&& r>R_n+1
\end{array}
\right.
\end{equation*}
and
\begin{equation*}
\frac{\partial^2 u_n(r)}{\partial r^2}=
\left\{
\begin{array}{lll}
0&&r<R_n;\\
\frac{\pi^2}{2}s_0 \left(\sin^2(\frac{\pi}{2}(r-R_n))-\cos^2(\frac{\pi}{2}(r-R_n))\right)&&R_n\leq r\leq R_n+1;\\
0&& r>R_n+1.
\end{array}
\right.
\end{equation*}
Then we get %, using (\ref{proplapl})
%\begin{equation*}
\begin{eqnarray*}
J(u_n)&=& \int_{\R^N} \frac12 |\Delta u_n|^2+F(u_n)dx \leq \\
&\leq  &C_1 \int_{R_n}^{R_n+1}\left[
\left|\frac{\partial^2 u_n(r)}{\partial r^2}+\frac{\partial u_n(r)}
{\partial r}(\frac{N-1}{r})\right|^2+
\sup_{|s|<s_0} F(s) \right]
r^{N-1}dr +\\
&+& C_2\int_0^{R_n}F(s_0)r^{N-1}dr
\end{eqnarray*}
%\end{equation*}
where $C_1$ and $C_2$ are strictly positive constants. We have $F(s_0)<0$ and, thus, an easy growth estimate gives $J(u_n)<0$ for $R_n\rightarrow +\infty$.
\end{proof}
\begin{prop}\label{subadditivity}
For any $\rho>\rho_0$ and $0<\mu<\rho$ the following subadditivity condition holds
\begin{equation}\label{subbbilap}
J_{\rho}<J_{\mu}+J_{\sqrt{\rho^2-\mu^2}}.
\end{equation}
\end{prop}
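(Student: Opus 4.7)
The strategy is a scaling argument followed by a case analysis, modelled on the proof of Lemma \ref{fondamentale}. First I would introduce the dilation $u_\theta(x):=u(x/\theta^{2/N})$, which preserves the $L^{\infty}$-profile of $u$ and satisfies $\|u_\theta\|_2=\theta\|u\|_2$, $\int F(u_\theta)\,dx=\theta^2\int F(u)\,dx$, and $\|\Delta u_\theta\|_2^2=\theta^{2-8/N}\|\Delta u\|_2^2$ by a direct change of variables. Hence
\[
J(u_\theta)=\theta^{2}J(u)-\tfrac12\bigl(\theta^{2}-\theta^{2-8/N}\bigr)\|\Delta u\|_2^{2},
\]
and since $N>4$ the correction is strictly negative for $\theta>1$ whenever $\|\Delta u\|_2>0$.

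To promote this to the infima, I would show that for every $\sigma>\rho_{0}$ any minimizing sequence $\{u_{n}\}\subset B_{\sigma}$ satisfies $\|\Delta u_{n}\|_{2}\ge c_\sigma>0$: otherwise the Gagliardo--Nirenberg inequality together with \eqref{Fp} (and $F(0)=0$) would force $\int F(u_{n})\,dx\to 0$, contradicting $J_{\sigma}<0$ from Proposition \ref{boundedness}. Feeding this into the scaling identity and letting $n\to\infty$ yields the strict bound $J_{\theta\sigma}<\theta^{2}J_{\sigma}$ for all $\theta>1$ and $\sigma>\rho_{0}$. After refining $\rho_{0}$ to be the infimum of $\{\sigma>0:J_\sigma<0\}$ (still finite by Proposition \ref{boundedness}), this also gives $J_\sigma\ge 0$ for $\sigma\le\rho_0$ and $\sigma\mapsto J_{\sigma}$ strictly decreasing on $(\rho_{0},\infty)$.

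Set $\nu:=\sqrt{\rho^2-\mu^2}$ and, using the symmetry $(\mu,\nu)\leftrightarrow(\nu,\mu)$, assume without loss of generality $\mu\ge\nu$. Three cases arise. If $\mu,\nu>\rho_{0}$, the scaling applied to $\mu$ gives $J_{\rho}<(\rho/\mu)^{2}J_{\mu}=J_{\mu}+(\nu/\mu)^{2}J_{\mu}$; when $\mu=\nu$ this already equals $J_{\mu}+J_{\nu}$, and when $\mu>\nu$ a second application of the scaling (legitimate since $\nu>\rho_{0}$) yields $J_{\mu}<(\mu/\nu)^{2}J_{\nu}$, equivalent to $(\nu/\mu)^{2}J_{\mu}<J_{\nu}$, so the conclusion follows. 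If $\nu\le\rho_{0}<\mu$, then $J_{\nu}\ge 0$ and strict monotonicity gives $J_{\rho}<J_{\mu}\le J_{\mu}+J_{\nu}$. Finally if $\mu,\nu\le\rho_{0}$, both $J_\mu,J_\nu\ge 0$ and $J_{\rho}<0\le J_{\mu}+J_{\nu}$ trivially.

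The main obstacle is the uniform lower bound $\|\Delta u_{n}\|_{2}\ge c_\sigma$ along minimizing sequences, which is what converts the pointwise scaling identity into a strict inequality for the infima. Without it one only recovers $J_{\theta\sigma}\le\theta^{2}J_{\sigma}$, which is too weak to yield strict subadditivity.
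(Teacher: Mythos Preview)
Your proof is correct and follows the same strategy as the paper: the same dilation $u_\theta(x)=u(x/\theta^{2/N})$, the same scaling identity yielding $J_{\theta\sigma}<\theta^{2}J_{\sigma}$ for $\theta>1$ when $J_\sigma<0$, and the same case analysis (the paper simply writes ``by arguing as in Lemma~\ref{fondamentale}''). You are in fact more explicit than the paper about the lower bound $\|\Delta u_{n}\|_{2}\ge c_\sigma>0$ along minimizing sequences and about refining $\rho_{0}$ so that $J_\sigma\ge 0$ for $\sigma\le\rho_0$; incidentally, your scaling exponent $2-8/N$ for $\|\Delta u_\theta\|_2^2$ is the correct one (the paper's $2-4/N$ is a typo, but the argument is unaffected since both exponents are $<2$).
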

\begin{proof}
Let us define $u_{\lambda}(x)=u(\frac{x}{\lambda^{\frac{2}{N}}})$ (so that $\|u_{\lambda}\|_{2}=\lambda \|u\|_{2}$).
We have
\begin{equation*}
J(u_\lambda)=\frac{\lambda^{2-\frac{4}{N}}}{2}||u||_{D^{2,2}}^2+\lambda^2T(u).
\end{equation*}
and then
$$J_{\lambda \mu}\leq \lambda^{2}\left( \frac{1}{2}||u_n||_{D^{2,2}}^2+T(u_n)\right)+\frac{1}{2}\left( \lambda^{2-\frac{4}{N}}-\lambda^2\right)||u_n||_{D^{2,2}}^2$$
for any minimizing sequence $\{u_n\}\subset B_{\mu}$. Taken $\mu$ such that $J_{\mu}<0$ and $\lambda>1$ we obtain
$$J_{\lambda \mu}< \lambda^2 J(\mu).$$
By arguing as in Lemma \ref{fondamentale} we have \eqref{subbbilap}.
\end{proof}
\begin{prop}
If \eqref{Fp}, \eqref{F0} and \eqref{F1} hold then the functional $T$ fulfills \eqref{(2)}, \eqref{(3)}, \eqref{(5)}, \eqref{(6)}
\end{prop}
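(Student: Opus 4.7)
The plan is to verify each of the four conditions \eqref{(2)}, \eqref{(3)}, \eqref{(5)}, \eqref{(6)} for $T(u)=\int_{\R^{N}}F(u)\,dx$, exploiting the polynomial bound \eqref{Fp} on $F'$ and the $H^{2}$-boundedness of minimizing sequences established in Proposition \ref{boundedness}. Let $\{u_{n}\}\subset B_{\rho}$ be a minimizing sequence with $u_{n}\rightharpoonup \bar u$ in $H^{2}(\R^{N})$. By the Sobolev embedding $H^{2}(\R^{N})\hookrightarrow L^{r}(\R^{N})$ for every $r\in[2,2^{**}]$, $2^{**}:=2N/(N-4)$, the sequence is uniformly bounded in each such $L^{r}$, and after extraction $u_{n}\to \bar u$ a.e. Integrating \eqref{Fp} gives $|F(s)|\le C(|s|^{q+1}+|s|^{p+1})$ with both $q+1$ and $p+1$ strictly below $2^{**}$.

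For \eqref{(2)}, I would apply a Brezis--Lieb-type splitting for non-pure-power nonlinearities. Writing
$$F(u_{n})-F(u_{n}-\bar u)-F(\bar u)=\int_{0}^{1}\bigl[F'(u_{n}-\bar u+t\bar u)-F'(t\bar u)\bigr]\bar u\,dt,$$
the integrand tends to zero a.e.\ by $u_{n}-\bar u\to 0$ a.e.\ and continuity of $F'$; by \eqref{Fp} and Hölder it is equi-integrable on $\R^{N}$, so Vitali's theorem yields the required cancellation after integration.

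For \eqref{(3)}, setting $w_{n}:=u_{n}-\bar u$, the mean value theorem together with \eqref{Fp} gives the pointwise estimate $|F(\alpha_{n}w_{n})-F(w_{n})|\le C|\alpha_{n}-1|(|w_{n}|^{q+1}+|w_{n}|^{p+1})$, up to the harmless factor $\max(1,|\alpha_{n}|^{p})\to 1$. Integrating and using the boundedness of $w_{n}$ in $L^{q+1}\cap L^{p+1}$ together with $\alpha_{n}\to 1$ gives \eqref{(3)}. Condition \eqref{(6)} is immediate:
$$|\langle T'(u_{n}),u_{n}\rangle|\le c_{1}\|u_{n}\|_{q+1}^{q+1}+c_{2}\|u_{n}\|_{p+1}^{p+1}=O(1).$$
Finally, for \eqref{(5)}, once the first part of Lemma \ref{main-abs} has been invoked one has $\bar u\in B_{\rho}$, so weak $H^{1}$-convergence combined with norm convergence in $L^{2}$ forces $\|u_{n}-u_{m}\|_{2}\to 0$; interpolation against the uniform $L^{2^{**}}$ bound upgrades this to $\|u_{n}-u_{m}\|_{r}\to 0$ for every $r\in[2,2^{**})$, and Hölder applied to
$$|\langle T'(u_{n})-T'(u_{m}),u_{n}-u_{m}\rangle|\le C\!\!\sum_{s\in\{q,p\}}\!\!\bigl(\|u_{n}\|_{s+1}^{s}+\|u_{m}\|_{s+1}^{s}\bigr)\|u_{n}-u_{m}\|_{s+1}$$
produces the desired $o(1)$.

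I expect the principal obstacle to lie in \eqref{(2)}: the classical Brezis--Lieb lemma is stated for pure powers, so passing from a.e.\ convergence of the integrand above to $L^{1}$-convergence requires an equi-integrability argument that crucially exploits the strict subcriticality $p<(N+4)/(N-4)$ in \eqref{Fp}. Once this is in place, the remaining three conditions follow from routine Hölder and Sobolev estimates together with the $H^{2}$-boundedness already secured.
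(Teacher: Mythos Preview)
Your proposal is correct and follows essentially the same route as the paper, which in fact gives only a one-line proof: ``It follows as in Proposition~\ref{mma}. Condition \eqref{(2)} follows from standard arguments.'' Your detailed verification of each of \eqref{(2)}--\eqref{(6)} (Brezis--Lieb splitting via Vitali for \eqref{(2)}, mean-value estimate with $\alpha_n\to 1$ for \eqref{(3)}, interpolation plus H\"older for \eqref{(5)} after the first part of Lemma~\ref{main-abs} yields $\|u_n-u_m\|_2\to 0$, and the trivial bound for \eqref{(6)}) matches exactly what the paper does in Proposition~\ref{mma} for the Schr\"odinger--Poisson case and merely transports it to the $H^2$ setting.
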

\begin{proof}
It follows as in Proposition \ref{mma}. Condition \eqref{(2)} follows from
standard arguments.
\end{proof}

%Notice that the same result is true in the case $1\le N\le4$; in this case no growth condition on $F'$ are required.
\begin{proof}[Proof of Theorem \ref{minNLSapp}]
We argue as in the proof of Theorem \ref{MT1}. Recall that
if $\{u_n\}$ is a bounded sequence in $H^2(\R^N)$  such that
\begin{equation*}
\lim_{n\rightarrow \infty} \left (\sup_{y\in \R^n} \int_{B(0,1)} |u_n|^2 dx \right )
=0
\end{equation*}
then $u_n\rightarrow 0$ in $L^{q}(\R^n)$ for any $q \in \left (2, \frac{2N}{N-4}\right )$. 
The proof of this fact is given in \cite{BV}. Finally we apply Lemma \ref{main-abs} to the functional $J(u)$.
\end{proof}
%Finally,  taken a minimizer for \eqref{Jrop} we get a couple
% $(\omega_{\rho}, u_\rho)\in\R\times H^{2}(\R^{N};\C)$, where $\omega_{\rho}$ is the Lagrange multiplier associated to the critical point $u_{\rho}$ on $B_{\rho}$.
%We have that  $(\omega_{\rho}, u_\rho)$  that is solution of 
%\begin{equation}\label{NLSVQbiell}
%\Delta^2 u_ + F(u)=-\omega_{\rho} u
%\end{equation}
%and then we have the orbital stability of the standing wave $u(x)e^{i \omega_{\rho} t}$  for the following Schrodinger equation involving the biharmonic operator
%\begin{equation}\label{NLSVQbi}
%i\partial_t \psi - \Delta^2 \psi  -F'(|\psi|) \frac{\psi}{|\psi|}=0,
%\hbox{ } (t,x) \in \R\times \R^N
%\end{equation}
Finally the orbital stability of the standing waves is proved. As in the previous section,
we define
$$S_{\rho}=\{e^{i\theta }
u(x): \theta\in [0,2\pi), \|u\|_{2}=\rho, \ 
 J(u)=J_{\rho}\}$$
and we say that $S_{\rho}$ is {\sl orbitally stable} if
for every $\varepsilon>0$ there exists $\delta>0$ such that for any $\psi_{0}\in H^{2}(\R^{N})$ with
$\inf_{v\in S_{\rho}}\|v-\psi_{0}\|_{H^{2}(\R^{N})}<\delta$ we have
$$\forall \, t>0 \ \ \ \inf_{v\in S_{\rho}
} \|\psi(.,t)-v\|_{H^{2}(\R^{N})}<\varepsilon$$
where $\psi(.,t)$ is the solution of \eqref{NLSVQbi} with initial datum $\psi_{0}$.
Arguing as for the Schrodinger-Poisson equation we obtain the following
\begin{cor}
Let \eqref{Fp}, \eqref{Fp} and \eqref{F1} hold. Then there exists $\rho_0>0$ such that
for any $\rho\in (\rho_0,+ \infty)$   the standing waves $\psi_{\rho}(t,x)=e^{-i\omega_{\rho}t}u_{\rho}(x)$ are orbitally stable solutions of  \eqref{NLSVQbi}.
\end{cor}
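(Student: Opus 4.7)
The plan is to transpose the Cazenave--Lions argument of Section 4 to the biharmonic setting, reducing orbital stability to the compactness (up to translations) of minimizing sequences for $J_\rho$, which is already available from the proof of Theorem \ref{minNLSapp} via Lemma \ref{main-abs}. The two structural ingredients needed are: (i) global well-posedness of the Cauchy problem for \eqref{NLSVQbi} in $H^2(\R^N)$, together with conservation of the $L^2$-charge $C(\psi)=\tfrac12\|\psi\|_2^2$ and of the energy $J(\psi)$ along the flow; (ii) the identification of $S_\rho$ with the orbit, under phase multiplication and translations, of the minimizer $u_\rho$ produced by Theorem \ref{minNLSapp}.

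With these in hand I would argue by contradiction exactly as in Section 4. Assume $S_\rho$ is not orbitally stable for some $\rho>\rho_0$, so there exist $\varepsilon>0$, initial data $\{\psi_{n,0}\}\subset H^2(\R^N)$ and times $\{t_n\}\subset\R$ with
$$\inf_{v\in S_\rho}\|\psi_{n,0}-v\|_{H^2}\to 0,\qquad \inf_{v\in S_\rho}\|\psi_n(\cdot,t_n)-v\|_{H^2}\ge\varepsilon,$$
where $\psi_n$ solves \eqref{NLSVQbi} with initial datum $\psi_{n,0}$. Continuity of $\|\cdot\|_2$ and of $J$ on $H^2$ yields $\|\psi_{n,0}\|_2\to\rho$ and $J(\psi_{n,0})\to J_\rho$; after multiplying by $\alpha_n=\rho/\|\psi_{n,0}\|_2\to 1$ one may assume $\psi_{n,0}\in B_\rho$, so $\{\psi_{n,0}\}$ is a minimizing sequence for $J_\rho$. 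Conservation of mass and energy then shows that $\{\psi_n(\cdot,t_n)\}$ is again a minimizing sequence for $J_\rho$. The compactness statement obtained in the course of proving Theorem \ref{minNLSapp} provides translations $\{y_n\}\subset\R^N$ such that $\psi_n(\cdot+y_n,t_n)$ converges strongly in $H^2$ to some $u\in S_\rho$; since $S_\rho$ is translation-invariant this forces $\inf_{v\in S_\rho}\|\psi_n(\cdot,t_n)-v\|_{H^2}\to 0$, contradicting the lower bound $\ge\varepsilon$.

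The main obstacle I expect is the verification of ingredient (i): the paper establishes the Cauchy theory only for the Schr\"odinger--Poisson equation. For \eqref{NLSVQbi} one would invoke the standard local well-posedness theory for fourth-order NLS in $H^2(\R^N)$ under the subcritical growth \eqref{Fp} (here $p<(N+4)/(N-4)$), and then upgrade local to global existence using Proposition \ref{boundedness}: coercivity of $J$ on $B_\rho$, combined with conservation of $C$ and $J$, pins $\|\Delta\psi(\cdot,t)\|_2$ to a bounded set on the maximal existence interval, so solutions cannot blow up. The remaining ingredients---$H^2$-continuity of $J$ and closedness of $S_\rho$---are routine consequences of the growth bound \eqref{Fp}, and the translation $y_n$ produced by Lemma \ref{main-abs} is harmless precisely because $S_\rho$ is translation-invariant.
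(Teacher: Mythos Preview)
Your proposal is correct and follows essentially the same approach as the paper, which simply states that one argues as for the Schr\"odinger--Poisson equation (i.e.\ the Cazenave--Lions contradiction argument of Section~4, combined with the compactness of minimizing sequences from Theorem~\ref{minNLSapp}). Your discussion is in fact more detailed than the paper's, which does not address the Cauchy theory for \eqref{NLSVQbi} at all.
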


\end{document}